%
%

\documentclass[12pt]{amsart}
\usepackage{amssymb,amsmath,amsthm}
\oddsidemargin=-.0cm
\evensidemargin=-.0cm
\textwidth=16cm
\textheight=22cm
\topmargin=0cm

\def \e{{\rm e}}

\def \eps{\varepsilon}

\def \d{{\boldsymbol{\delta}}}
\def \dist{{\rm d}}
\def \dd{{\rm d}}
\def \R {{\mathbb R}}
\def \N {{\mathbb N}}

\def \K {{\mathbb K}}
\def \U {{\mathcal O}}
\def\cbt {{\sc cbt}}

\def \au {\rm}
\def \ti {\it}
\def \jou {\rm}
\def \bk {\it}
\def \no#1#2#3 {{\bf #1} (#3), #2.}
\def \eds#1#2#3 {#1, #2, #3.}

\newtheorem{proposition}{Proposition}
\newtheorem{theorem}[proposition]{Theorem}
\newtheorem{corollary}[proposition]{Corollary}
\newtheorem{lemma}[proposition]{Lemma}
\theoremstyle{definition}
\newtheorem{definition}[proposition]{Definition}
\newtheorem*{example}{Example}
\newtheorem*{remark}{Remark}

\title[Totally dissipative dynamical processes]
{Totally dissipative dynamical processes\\
and their uniform global attractors}

\author[V.V. Chepyzhov, M. Conti and V. Pata]
{Vladimir V. Chepyzhov, Monica Conti and Vittorino Pata}

\dedicatory{Dedicated to the Memory of Professor Mark Iosifovich Vishik}

\thanks{Work partially supported by the Russian Foundation of Basic Researches (projects 11-01-00339
and 12-01-00203) and the Russian Ministry of Education and Science (agreement no.\ 8502).}

\address{
Institute for Information Transmission Problems - Russian Academy of Sciences
\newline\indent
Bolshoy Karetniy 19, Moscow 101447, Russia
{\vskip.3mm}
National Research University Higher School of Economics
\newline\indent
Myasnitskaya Street 20, Moscow 101000, Russia}
\email{chep@iitp.ru {\rm (V.V. Chepyzhov)}}

\address{
Politecnico di Milano - Dipartimento di Matematica ``F.\ Brioschi''
\newline\indent
Via Bonardi 9, 20133 Milano, Italy}
\email{monica.conti@polimi.it {\rm (M. Conti)}}
\email{vittorino.pata@polimi.it {\rm (V. Pata)}}

\subjclass[2000]{34D45, 37L30, 47H20}
\keywords{Dynamical processes, absorbing and attracting sets,
uniform global attractors}

\begin{document}

\begin{abstract}
We discuss the existence of the global attractor for a family of processes $U_\sigma(t,\tau)$ acting on
a metric space $X$ and depending on a symbol $\sigma$ belonging to some other metric space $\Sigma$.
Such an attractor
is uniform with respect to $\sigma\in\Sigma$, as well as
with respect to the choice of the initial time $\tau\in\R$. The existence of the attractor is established
for totally dissipative processes
without any continuity assumption. When the process satisfies some additional (but rather mild)
continuity-like hypotheses, a characterization of the attractor is given.
\end{abstract}

\maketitle

\section{Introduction}

\noindent
Let $(X,\dist)$ be a metric space, not necessarily complete.
A family of maps
$$U(t,\tau ):X\rightarrow X$$
depending on two real
parameters $t\geq\tau$ is
said to be a {\it dynamical process}, or more simply a {\it process}, on $X$ whenever
\begin{itemize}
\item[$\bullet$] $U(\tau ,\tau )={\rm id}_X$ (the identity map in $X$) for all $\tau \in\R$;
\smallskip
\item[$\bullet$] $U(t,\tau )=U(t,s)U(s,\tau )$ for all $t\geq s\geq \tau $.
\end{itemize}
Such a notion is particularly useful to describe the solutions to nonautonomous differential equations
in normed spaces. Indeed, assume to have the equation
\begin{equation}
\label{intro}
\frac{\dd}{\dd t}u(t)={\mathcal A}(t,u(t)),
\end{equation}
where, for every fixed $t\in\R$, ${\mathcal A}(t,\cdot)$ is a (possibly nonlinear) densely defined operator on
a normed space $X$.
If the Cauchy problem for \eqref{intro} is well posed, in some weak sense,
for all times $t\geq \tau$ and all initial data
$u_0\in X$ taken at the initial time $\tau\in\R$, then the corresponding solution $u(t)$
on the time-interval $[\tau,\infty)$
with $u(\tau)=u_0$ reads
$$u(t)=U(t,\tau)u_0,$$
where $U(t,\tau)$ is uniquely determined by the equation, and it is easily seen to satisfy
the two properties above. Within this picture, autonomous systems are just a particular case,
occurring when the operator ${\mathcal A}(t,\cdot)$ is constant in time. In that situation,
the evolution depends only on the difference $t-\tau$. In other words, the equality
$$U(t,\tau)=U(t-\tau,0)$$
holds for every $t\geq\tau$, and the one-parameter family of maps
$$S(h)=U(h,0),\quad h\geq 0,$$
fulfills the semigroup
axioms, i.e.\
\begin{itemize}
\item[$\bullet$] $S(0)={\rm id}_X$;
\smallskip
\item[$\bullet$] $S(h+r)=S(h)S(r)$ for all $h,r\geq 0$.
\end{itemize}
Summarizing, we may say that dynamical processes extend the concept of
dynamical semigroups for the evolution of open models
where time-dependent external excitations are present.

When dealing with differential problems arising from concrete evolutionary phenomena,
we are usually in presence of some dissipation mechanism. Adopting a global-geometrical point of view,
the theory of dissipative dynamical systems describes
this situation in terms of small sets of the phase space able to attract in a suitable
sense the trajectories arising from bounded regions of initial data.
In particular, it is interesting to locate the smallest set where
the whole asymptotic dynamics is eventually confined. For autonomous systems,
such a set is called the global attractor (we address the reader to
the classical books \cite{BV,HAL,HAR,TEM} for more details).
A similar concept can be used in connection with nonautonomous systems. Namely, it is possible to extend
the notion of global attractor for dynamical processes, or even families of dynamical processes
(see \cite{CV1,CV2,CV,HAR,SY}).
In this note, paralleling what done in \cite{CCP} for the semigroup
case, we aim to reconsider the theory of global attractors for families of dynamical processes
by defining the basic objects (e.g.\ the attractor) only in term of their attraction properties,
without appealing to any continuity-like notion (see Sections~{2-4}).
In fact, imposing further conditions on the processes, but still much weaker than continuity,
it is possible to recover the classical characterization given in \cite{CV} of the attractor in terms
of kernel sections of complete bounded trajectories (see Sections~{5-6}).
In the final Sections~7-8, we discuss an application to nonautonomous differential equations.

\subsection*{Notation}
For every $\eps>0$,
the $\eps$-neighborhood of a set $B\subset X$ is defined as
$$\U_\eps(B)=\bigcup_{x\in B}\,\big\{\xi\in X:\, \dist(x,\xi)<\eps\big\}.$$
We denote the standard Hausdorff semidistance
of two (nonempty) sets $B,C\subset X$ by
$$\d_X(B,C)=\sup_{x\in B}\,\dist(x,C)=\sup_{x\in B}\,\inf_{\xi\in C}\,\dist(x,\xi).$$
We have also the equivalent formula
$$\d_X(B,C)=\inf\big\{\eps>0:\, B\subset \U_\eps(C)\big\}.$$

\section{Families of Dissipative Processes}

\noindent
Rather than a single process, given another metric space $\Sigma$
we will consider a family of processes
$$\{U_{\sigma}(t,\tau )\}_{\sigma\in\Sigma}.$$
The parameter $\sigma $ is called
the {\it symbol} of $U_{\sigma }(t,\tau)$, whereas
$\Sigma $ is said to be the {\it symbol space}.
A single process $U(t,\tau )$ can be clearly
viewed as a family of processes with a symbol space made of one element.
Usually, in connection with nonautonomous differential
problems, the symbol is the
collection of all explicitly time-dependent terms appearing in the equations (see \cite{CV1,CV2,CV}).
Besides, the symbol itself may evolve in time in such a way that,
in combination with the process evolution, gives rise to an autonomous
dynamical system acting on the extended phase space $X\times \Sigma$, called
the {\it skew-product flow} or {\it skew-product semigroup} (see Section 4). The study of this semigroup
gives essential information on the evolution of the original
family of processes.

Analogously to the autonomous case, we introduce a number of definitions
that extend the concept of dissipation to the more general nonautonomous
situation. In what follows, the word {\it uniform} will be always understood
with respect to $\sigma\in\Sigma$.

\begin{definition}
A set $B\subset X$ is {\it uniformly absorbing} for
$U_{\sigma}(t,\tau)$ if for every bounded set $C\subset X$ there exists
a (uniform) entering time $t_{\e}=t_{\e}(C)$ such that
$$
U_{\sigma }(t,\tau )C\subset B,\quad\forall\sigma\in\Sigma,
$$
whenever $t-\tau\geq t_\e$.
\end{definition}

The existence of bounded absorbing sets translates in mathematical terms
the fact that a given system is dissipative.

\begin{definition}
The family of processes $U_{\sigma }(t,\tau )$
is said to be {\it uniformly dissipative} if there is a bounded
uniformly absorbing set.
\end{definition}

Uniform dissipativity is however a rather poor notion of dissipation
for a process, unless one can prove the existence of reasonably small (e.g.\ compact
or even of finite fractal dimension)
uniformly absorbing sets. In concrete differential problems,
this is out of reach if, for instance, the system
exhibits some hyperbolicity, which prevents the regularization of initial data.
Hence, a weaker object, albeit more effective, than a uniformly absorbing set should be considered
in order to depict the confinement of the longterm dynamics.

\begin{definition}
A set $K\subset X$ is called a {\it uniformly $\eps$-absorbing set}, for some $\eps>0$,
if its $\eps$-neighborhood $\U_\eps(K)$ is a uniformly absorbing set.
If $K$ is a uniformly $\eps$-absorbing set for all $\eps>0$,
then it is called a {\it uniformly attracting set}.
\end{definition}

The latter definition can be more conveniently given in terms
of Hausdorff semidistance: a set $K$
is uniformly attracting if for any bounded set $C\subset X$ we have
the limit relation
\begin{equation}
\label{attr-prop}
\lim_{t-\tau \to \infty }\,\Big[\sup_{\sigma \in \Sigma}\,
\d_X(U_{\sigma }(t,\tau )C,K)\Big]=0.
\end{equation}

\begin{remark}
Actually, the definition of a uniformly attracting set given in \cite{CV} (as well
as the one of a uniformly
absorbing set) is a little
different: indeed, the limit relation \eqref{attr-prop} is there replaced by
$$
\lim_{t\to \infty }\,\Big[\sup_{\sigma \in \Sigma}\,
\d_X(U_{\sigma }(t,\tau )C,K)\Big]=0,
$$
for every {\it fixed} $\tau\in\R$. Compared to this one, the definition
\eqref{attr-prop} adopted in the present paper is uniform with respect to $\tau\in\R$,
which renders the notion of attraction slightly stronger, and more closely related
to the concrete examples arising from partial differential equations.
Nonetheless, all the results proved in this paper remain valid (with the same proofs)
in the framework of \cite{CV}.
\end{remark}

Let now $\mathfrak{C}_{\Sigma }$ denote the collection of all possible
sequences in $X$ of the form
$$
y_{n}=U_{\sigma _{n}}(t_{n},\tau _{n})x_{n},
$$
where $x_{n}\in X$ is a bounded sequence, $\sigma_{n}\in\Sigma$ and
$t_{n}-\tau _{n}\to\infty$.
For any $y_{n}\in
\mathfrak{C}_{\Sigma },$ we consider the set
$$
L_{\Sigma }(y_{n})=\big\{x\in X:\,y_{n}\to x\ \text{up to a subsequence}\big\},
$$
which in fact can be empty for some $y_{n}$, or $y_n$ may contain a subsequence $y_{n_\imath}$ such that
$L_{\Sigma }(y_{n_\imath})=\emptyset$. Accordingly, we can rephrase the
attraction property \eqref{attr-prop} as follows.

\begin{lemma}
\label{AP}
A set $K\subset X$ is uniformly attracting for the family $U_{\sigma }(t,\tau )$ if and only if
$$
\dist(y_{n},K)\to 0,\quad \forall y_{n}\in \mathfrak{C}_{\Sigma}.
$$
\end{lemma}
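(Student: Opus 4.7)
The plan is to prove the two implications separately. Both directions rest on the elementary identity $\d_X(A,K)=\sup_{a\in A}\dist(a,K)$ together with the definition of $\mathfrak{C}_\Sigma$; no topological properties of $X$ or continuity of the maps $U_\sigma(t,\tau)$ are involved.

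For the forward implication, assume $K$ is uniformly attracting and fix $y_n=U_{\sigma_n}(t_n,\tau_n)x_n\in\mathfrak{C}_\Sigma$. Since $\{x_n\}$ is bounded, pick a bounded set $C\subset X$ with $x_n\in C$ for all $n$. The obvious estimate
$$
\dist(y_n,K)=\dist\!\big(U_{\sigma_n}(t_n,\tau_n)x_n,K\big)\leq \d_X\!\big(U_{\sigma_n}(t_n,\tau_n)C,K\big)\leq \sup_{\sigma\in\Sigma}\d_X\!\big(U_{\sigma}(t_n,\tau_n)C,K\big)
$$
combined with $t_n-\tau_n\to\infty$ and the attraction property \eqref{attr-prop} gives $\dist(y_n,K)\to 0$.

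For the converse, I would argue by contraposition. Suppose $K$ is not uniformly attracting. Then \eqref{attr-prop} fails for some bounded $C\subset X$, so there exist $\eps>0$ and sequences $t_n,\tau_n\in\R$ with $t_n-\tau_n\to\infty$ such that
$$
\sup_{\sigma\in\Sigma}\d_X\!\big(U_{\sigma}(t_n,\tau_n)C,K\big)\geq \eps.
$$
I would then peel off the two suprema by an $\eps/4$ argument: first choose $\sigma_n\in\Sigma$ with $\d_X(U_{\sigma_n}(t_n,\tau_n)C,K)>\eps/2$, then choose $x_n\in C$ with $\dist(U_{\sigma_n}(t_n,\tau_n)x_n,K)>\eps/4$. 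The resulting sequence $y_n=U_{\sigma_n}(t_n,\tau_n)x_n$ belongs to $\mathfrak{C}_\Sigma$ (boundedness of $\{x_n\}$ is inherited from $C$, and $t_n-\tau_n\to\infty$ by construction) yet $\dist(y_n,K)>\eps/4$ for all $n$, contradicting the hypothesis.

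The only mildly delicate point is the double unpacking of the suprema in the contrapositive step; everything else is direct from the definitions. In particular, the argument does not require the selected $\sigma_n$ to achieve the supremum, nor does it require any measurability or continuity of the dependence on $\sigma$.
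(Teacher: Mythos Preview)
Your argument is correct; both implications are handled cleanly, and the contrapositive step with the $\eps/2$, $\eps/4$ unpacking of the two suprema is exactly the right way to extract the offending sequence in $\mathfrak{C}_\Sigma$. The paper itself does not supply a proof of this lemma at all---it is presented as an immediate rephrasing of \eqref{attr-prop}---so your write-up simply makes explicit the routine verification the authors left to the reader.
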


Next, we denote the union of all
$L_{\Sigma }(y_{n})$ by
$$
A_{\Sigma }^{\star}=\left\{x\in X:\, y_{n}\to x\ \text{up
to a subsequence, for some }y_{n}\in \mathfrak{C}_{\Sigma }\right\},
$$
and, for any given bounded set $C\subset X$, we define the
{\it uniform $\omega$-limit} of $C$ by
\begin{equation}
\label{omega}
\omega_{\Sigma}(C)=\bigcap_{h\geq 0}\,\overline{\bigcup_{\sigma\in\Sigma}
\,\bigcup_{t-\tau \geq h}\,U_{\sigma }(t,\tau )C},
\end{equation}
the {\it bar} standing for closure in $X$.
Note that,
without additional assumptions, both the sets
$A_{\Sigma }^{\star}$ and $\omega_{\Sigma}(C)$ might be empty.

\begin{lemma}
\label{DP}
The following assertions hold:
\begin{itemize}
\item[(i)] $A_{\Sigma}^{\star}$ is contained in any closed
uniformly attracting set.
\smallskip
\item[(ii)] For any bounded set $C\subset X$ we have the inclusion
$\omega_{\Sigma }(C)\subset A_{\Sigma }^{\star}$. Besides,
$$A_{\Sigma}^{\star}=\bigcup\,\omega_{\Sigma}(C),$$
where the union is taken over all bounded sets $C\subset X$.
\smallskip
\item[(iii)] If $U_{\sigma}(t,\tau)$ is uniformly
dissipative, then for any bounded uniformly absorbing set $B$ and any bounded set $C$ we have the relation
$$\omega_\Sigma(C)\subset\omega_\Sigma(B)=A_{\Sigma}^{\star}.$$
The latter equality in particular implies that
$A_{\Sigma}^{\star}$ is closed in $X$.
\end{itemize}
\end{lemma}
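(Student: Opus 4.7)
\medskip

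\noindent\textbf{Proof plan.} The three assertions are most cleanly handled once one has the following sequential reformulation of the uniform $\omega$-limit, which I would establish first: a point $x$ belongs to $\omega_{\Sigma}(C)$ if and only if there exist $x_n\in C$, $\sigma_n\in\Sigma$ and $t_n-\tau_n\to\infty$ with $U_{\sigma_n}(t_n,\tau_n)x_n\to x$. One direction is immediate; the other uses a standard diagonal extraction over the nested sets indexed by $h\to\infty$ in \eqref{omega}. With this in hand, item (i) follows at once: if $K$ is closed and uniformly attracting and $x\in A_{\Sigma}^{\star}$, pick $y_{n}\in\mathfrak{C}_{\Sigma}$ with $y_n\to x$ along a subsequence; Lemma~\ref{AP} gives $\dist(y_n,K)\to 0$, and closedness yields $x\in K$.

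For (ii), the inclusion $\omega_{\Sigma}(C)\subset A_{\Sigma}^{\star}$ is immediate from the sequential description, since the defining sequence of $\omega_{\Sigma}(C)$ is automatically a member of $\mathfrak{C}_{\Sigma}$ (the points $x_n$ lie in the bounded set $C$). To prove $A_{\Sigma}^{\star}=\bigcup_{C}\omega_{\Sigma}(C)$, given $x\in A_{\Sigma}^{\star}$ with realizing sequence $y_n=U_{\sigma_n}(t_n,\tau_n)x_n$, I would simply take $C=\{x_n:n\in\N\}$, which is bounded by hypothesis, and observe that the same sequence $y_n$ witnesses $x\in\omega_{\Sigma}(C)$.

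The main content lies in (iii), which rests on the key inclusion $\omega_{\Sigma}(C)\subset\omega_{\Sigma}(B)$ whenever $B$ is a bounded uniformly absorbing set and $C$ is bounded. Given $x\in\omega_{\Sigma}(C)$ with realizing sequence $z_n=U_{\sigma_n}(t_n,\tau_n)x_n$, $x_n\in C$, I would invoke the entering time $t_{\e}=t_{\e}(C)$ provided by the absorbing property and, for $n$ large enough that $t_n-\tau_n\geq t_{\e}$, use the process identity to decompose
$$
z_n=U_{\sigma_n}(t_n,\tau_n+t_{\e})\,w_n,\qquad
w_n=U_{\sigma_n}(\tau_n+t_{\e},\tau_n)x_n\in B.
$$
Since $t_n-(\tau_n+t_{\e})\to\infty$, this rewriting exhibits $z_n$ as a sequence realizing $x\in\omega_{\Sigma}(B)$. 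The equality $\omega_{\Sigma}(B)=A_{\Sigma}^{\star}$ follows: the inclusion $\omega_{\Sigma}(B)\subset A_{\Sigma}^{\star}$ is a special case of (ii), and the reverse is obtained by applying the same splitting to an arbitrary realizing sequence of a point $x\in A_{\Sigma}^{\star}$, with $C$ again taken to be the bounded set of initial data. Closedness of $A_{\Sigma}^{\star}$ is then free, since by \eqref{omega} the set $\omega_{\Sigma}(B)$ is an intersection of closed sets.

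The only mildly delicate step is the sequential characterization of $\omega_{\Sigma}(C)$ and the parallel argument that a limit $x\in A_{\Sigma}^{\star}$ admits a \emph{single} realizing sequence rather than just limit points of iterated family tails; the diagonal extraction there is routine but is the place where one must be careful, since the family $\{U_{\sigma}(t,\tau)\}$ is assumed to carry no continuity structure and one cannot shortcut the argument via topological closure manipulations.
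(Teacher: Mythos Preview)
Your proof is correct and follows the paper's approach in (i) and (ii) essentially verbatim; for (iii) the paper uses the same process decomposition but carries it out directly at the set level---showing $\bigcup_{t-\tau\geq h+t_\e}U_\sigma(t,\tau)C\subset\bigcup_{t-\tau\geq h}U_\sigma(t,\tau)B$ and then passing to closures and intersections in \eqref{omega}---which bypasses sequences entirely and makes the step you flag as delicate disappear. Incidentally, even in your sequential route no diagonal extraction is needed: for each $n$ just choose a single point of the tail indexed by $h=n$ lying within distance $1/n$ of $x$.
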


\begin{proof}
If $x\in A_\Sigma^\star$,
then $y_{n}\to x$ for some $y_{n}\in \mathfrak{C}_{\Sigma}$, and we readily obtain from
Lemma \ref{AP} that
$$
\dist(y_{n},K)\to 0
$$
for any uniformly attracting set $K$. If $K$ is also closed, then
$x\in K$, which proves~(i). Concerning (ii), the inclusion $\omega_{\Sigma }(C)\subset A_{\Sigma }^{\star}$
is straightforward from \eqref{omega}, whereas the subsequent equality comes from the very definition of $L_\Sigma(y_n)$.
Indeed,
$$x\in L_\Sigma(y_n)\quad\Rightarrow\quad x\in\omega_\Sigma(\{x_n\}),$$
where $x_n$ is a bounded sequence in $X$ and
$$y_n=U_{\sigma_n}(t_n,\tau_n)x_n\in \mathfrak{C}_{\Sigma}.$$
Finally, in light of (ii), the equality $\omega_\Sigma(B)=A_{\Sigma}^{\star}$
in (iii) clearly follows from
the inclusion $\omega_\Sigma(C)\subset\omega_\Sigma(B)$.
Let then $B$ and $C$ be a bounded uniformly absorbing set and a
bounded set, respectively. Then there is $t_\e>0$ such that
$$
U_{\sigma }(\tau +t_\e,\tau )C\subset B,\quad  \forall \tau \in\R,\,\forall \sigma \in \Sigma.
$$
Therefore, for $t-\tau\geq t_\e$ we get
$$
U_{\sigma }(t,\tau )C =U_{\sigma }(t,\tau +t_\e)U_{\sigma }(\tau +t_\e,\tau)C
\subset U_{\sigma}(t,\tau +t_\e)B.
$$
Accordingly,
$$
\bigcup_{t-\tau \geq h+t_\e}\,U_{\sigma }(t,\tau )C
\subset \bigcup_{t-\tau \geq h}\,U_{\sigma }(t,\tau )B,
$$
and taking the union over all $\sigma \in \Sigma $ and the intersection in
$h\geq 0$,  from \eqref{omega} we arrive at the desired inclusion.
\end{proof}

Among uniformly attracting sets, of particular interest are the compact ones.
Hence, following \cite{CCP}, we consider the
collection of sets
$$
\K_{\Sigma}=\big\{K\subset X:\,K\text{ is compact and uniformly attracting}
\big\}.
$$
Using the results above, we establish a necessary and sufficient condition
in order for a compact set to
be uniformly attracting.

\begin{proposition}
\label{CPT}
Let $K\subset X$ be a compact set. Then $K\in\K_{\Sigma}$ if and
only if
$$
\emptyset \neq L_{\Sigma}(y_{n})\subset
K,\quad \forall y_{n}\in \mathfrak{C}_{\Sigma}.
$$
\end{proposition}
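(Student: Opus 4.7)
The plan is to split the biconditional in the obvious way, using Lemma \ref{AP} as the main bridge between the attraction property and the sequential language of $\mathfrak{C}_\Sigma$. A crucial observation to be used throughout is that if $y_n\in\mathfrak{C}_\Sigma$, then every subsequence $y_{n_j}$ still belongs to $\mathfrak{C}_\Sigma$, since the defining conditions (boundedness of $x_n$ and $t_n-\tau_n\to\infty$) are stable under passing to subsequences.

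For the forward implication, assume $K$ is compact and uniformly attracting, and pick an arbitrary $y_n=U_{\sigma_n}(t_n,\tau_n)x_n\in\mathfrak{C}_\Sigma$. Lemma \ref{AP} gives $\dist(y_n,K)\to 0$. To prove $L_\Sigma(y_n)\subset K$, I would take any subsequential limit $x$ of $y_n$: by continuity of the distance function, $\dist(x,K)=0$, so $x\in K$ since $K$ is closed. To prove $L_\Sigma(y_n)\neq\emptyset$, I would pick for each $n$ a point $k_n\in K$ with $\dist(y_n,k_n)\to 0$ (approximate minimizers suffice, exact ones exist by compactness), extract a convergent subsequence $k_{n_j}\to k\in K$ using compactness of $K$, and conclude $y_{n_j}\to k$, whence $k\in L_\Sigma(y_n)$.

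For the reverse implication, I would argue by contradiction. Suppose $K$ is not uniformly attracting. By Lemma \ref{AP}, there exist some $y_n\in\mathfrak{C}_\Sigma$ with $\dist(y_n,K)\not\to 0$, so after passing to a subsequence (which, by the observation above, is still in $\mathfrak{C}_\Sigma$) we may assume $\dist(y_n,K)\geq\eta$ for some $\eta>0$ and every $n$. By hypothesis applied to this subsequence, $L_\Sigma(y_n)\neq\emptyset$, so a further subsequence converges to some $x\in K$. But then $\dist(y_{n_j},K)\to 0$, contradicting the lower bound $\eta$.

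The only real subtlety lies in the nonemptiness half of the forward direction, where one must convert the semidistance estimate $\dist(y_n,K)\to 0$ into an actual cluster point of $y_n$; this is precisely where the compactness (and not just closedness) of $K$ enters, via the extraction of a convergent subsequence from the approximating points in $K$. Everything else is formal bookkeeping with Lemma \ref{AP} and the definition of $L_\Sigma(y_n)$.
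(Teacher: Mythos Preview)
Your proof is correct and follows essentially the same route as the paper: both directions hinge on Lemma~\ref{AP}, compactness of $K$ is used to extract a convergent subsequence from approximating points $k_n\in K$ to get $L_\Sigma(y_n)\neq\emptyset$, and the converse is a contradiction argument via a subsequence bounded away from $K$. The only cosmetic difference is that the paper cites Lemma~\ref{DP}(i) for the inclusion $L_\Sigma(y_n)\subset K$, whereas you argue it directly from closedness of $K$; the content is the same.
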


\begin{proof}
If $K$ is uniformly attracting and $y_n\in\mathfrak{C}_{\Sigma}$,
point (i) of Lemma \ref{DP} implies that $L_{\Sigma}(y_n)\subset K$.
Besides, by Lemma \ref{AP},
$$\dist(y_n,\xi_n)\to 0,$$
for some $\xi_n\in K$. Since $K$ is compact, there is $\xi\in K$
such that (up to a subsequence)
$$\xi_n\to\xi\in K\quad\Rightarrow\quad y_n\to\xi\quad\Rightarrow\quad L_{\Sigma}(y_n)\neq\emptyset.$$
Conversely,
if $K$ is not attracting,
$$\dist(y_n,K)>\eps,$$
for some $\eps>0$ and $y_n\in\mathfrak{C}_{\Sigma}$. Therefore, $L_{\Sigma}(y_n)\cap K=\emptyset$.
\end{proof}

As a straightforward consequence, we deduce a corollary.

\begin{corollary}
\label{CPTcor}
If $K_1,K_2\in\K_{\Sigma}$ then
$K_1\cap K_2\in\K_{\Sigma}$.
\end{corollary}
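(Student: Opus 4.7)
The plan is to deduce this corollary directly from the characterization given by Proposition~\ref{CPT}. The statement to verify is that $K_1\cap K_2$ is compact and uniformly attracting, and both properties should follow with very little work once the characterization is in place.

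First I would address compactness. Since $X$ is a metric space (hence Hausdorff), compact sets are closed, so $K_1\cap K_2$ is a closed subset of the compact set $K_1$, and is therefore compact itself. This is a one-line observation and requires no input from the dynamics.

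Next, I would verify the attraction property via Proposition~\ref{CPT}. Fix an arbitrary sequence $y_n\in\mathfrak{C}_\Sigma$. Applying the proposition to $K_1$ yields
$$\emptyset\neq L_\Sigma(y_n)\subset K_1,$$
and applying it to $K_2$ yields $L_\Sigma(y_n)\subset K_2$. Intersecting the two inclusions gives $L_\Sigma(y_n)\subset K_1\cap K_2$, and the nonemptiness is already guaranteed by either one of the two applications. Thus the criterion of Proposition~\ref{CPT} is satisfied for $K_1\cap K_2$, and we conclude $K_1\cap K_2\in\K_\Sigma$.

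There is really no main obstacle here: the work was done in setting up Proposition~\ref{CPT}, which cleanly decouples the attraction condition into a pointwise (in $y_n$) statement that is manifestly preserved under finite intersections. The only mild subtlety worth flagging is that the analogous argument for arbitrary (infinite) intersections would break down, because one could lose nonemptiness of $L_\Sigma(y_n)$ in the limit; but for two sets, and hence for any finite family by induction, the argument above is immediate.
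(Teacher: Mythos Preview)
Your argument is correct and matches the paper's intent exactly: the corollary is presented there as a straightforward consequence of Proposition~\ref{CPT}, with no separate proof given. As a minor aside, your closing caveat is off---since $L_\Sigma(y_n)$ does not depend on the $K_i$ at all, the same reasoning actually shows that $\K_\Sigma$ is stable under arbitrary intersections, not just finite ones.
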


As it will be clear in the subsequent section, the collection $\K_{\Sigma}$ plays a crucial role
in the asymptotic analysis of the process. This motivates the
following definition.

\begin{definition}
The family $U_{\sigma }(t,\tau)$ is
called {\it uniformly asymptotically compact} if it has a compact
uniformly attracting set, i.e.\ if the collection $\K_{\Sigma}$ is
nonempty.
\end{definition}

\begin{remark}
It is apparent that any uniformly asymptotically compact family is
in particular uniformly dissipative.
\end{remark}

\begin{proposition}
\label{proproia}
If $U_{\sigma }(t,\tau)$ is
uniformly asymptotically compact, then
$A_{\Sigma}^{\star}\in \K_{\Sigma}$.
\end{proposition}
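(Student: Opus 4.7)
The plan is to verify the two defining conditions of $\mathcal{K}_\Sigma$ for $A_\Sigma^\star$: compactness and uniform attraction. The hypothesis provides some compact uniformly attracting set $K\in\mathcal{K}_\Sigma$, and by the preceding remark the family is also uniformly dissipative, so a bounded uniformly absorbing set $B$ exists. These two objects will carry the whole proof.

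First I would establish compactness of $A_\Sigma^\star$. Since $K$ is compact it is in particular closed, so Lemma~\ref{DP}(i) yields $A_\Sigma^\star\subset K$. By Lemma~\ref{DP}(iii), applied with the absorbing set $B$ guaranteed by uniform dissipativity, we have $A_\Sigma^\star=\omega_\Sigma(B)$, which is closed as a countable intersection of closed sets (cf.\ the defining formula \eqref{omega}). Thus $A_\Sigma^\star$ is a closed subset of the compact set $K$, and hence is itself compact.

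Next I would show that $A_\Sigma^\star$ is uniformly attracting by invoking the criterion of Proposition~\ref{CPT}. Fix an arbitrary sequence $y_n\in\mathfrak{C}_\Sigma$; I need to verify
$$
\emptyset\neq L_\Sigma(y_n)\subset A_\Sigma^\star.
$$
The inclusion is automatic: by the very definition of $A_\Sigma^\star$ as the union of all $L_\Sigma(z_n)$ with $z_n\in\mathfrak{C}_\Sigma$, any cluster point of $y_n$ belongs to $A_\Sigma^\star$. Nonemptiness is obtained by applying the converse direction of Proposition~\ref{CPT} to the given $K\in\mathcal{K}_\Sigma$, which guarantees $L_\Sigma(y_n)\neq\emptyset$. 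Since $A_\Sigma^\star$ is already known to be compact, the criterion of Proposition~\ref{CPT} then delivers $A_\Sigma^\star\in\mathcal{K}_\Sigma$.

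There is no real obstacle here: the argument is a bookkeeping exercise that combines the three items of Lemma~\ref{DP} with the compactness criterion of Proposition~\ref{CPT}. The only subtle point worth double-checking is that the nonemptiness of $L_\Sigma(y_n)$ must come from an \emph{external} compact attracting set $K$ provided by the hypothesis; one cannot shortcut this by invoking $A_\Sigma^\star$ itself, which is precisely the object under construction.
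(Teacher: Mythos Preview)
Your proof is correct and follows essentially the same route as the paper's: both use Lemma~\ref{DP}(i) and (iii) together with the given $K\in\K_\Sigma$ to obtain that $A_\Sigma^\star$ is a closed subset of $K$, hence compact, and then invoke Proposition~\ref{CPT} (nonemptiness of $L_\Sigma(y_n)$ from $K$, inclusion into $A_\Sigma^\star$ by definition) to conclude uniform attraction. The only cosmetic difference is that the paper cites the closedness of $A_\Sigma^\star$ directly from the statement of Lemma~\ref{DP}(iii) rather than re-deriving it from the intersection formula~\eqref{omega}.
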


\begin{proof}
By assumption, there exists $K\in \K_{\Sigma}$.
Due to Proposition \ref{CPT}, $L_\Sigma(y_n)\neq\emptyset$
for all $y_n\in\mathfrak{C}_\Sigma$ and $A_{\Sigma }^{\star}$ is not empty, being
the union of all $L_\Sigma(y_n)$.
Besides, $A_{\Sigma }^{\star}\subset K$ by (i) of Lemma \ref{DP}.
Since the family $U_{\sigma}(t,\tau)$ is uniformly dissipative,
by (iii) of Lemma \ref{DP} we learn that $A_{\Sigma }^{\star}$ is closed,
and since $K$ is compact, $A_{\Sigma }^{\star}$ is compact as well.
Finally, invoking again Proposition \ref{CPT}, we conclude that
$A_{\Sigma }^{\star}$ is uniformly attracting.
\end{proof}

\section{Uniform Global Attractors}

\noindent
Like in the autonomous case, we are interested in finding the minimal compact
attracting set. In fact, dealing with nonautonomous systems, the property of
minimality turns out to be the natural one
to define the (unique) global attractor,
since we cannot rely any longer on the invariance property, typical
of semigroups.
Hence, the following definition sounds even
more reasonable in the nonautonomous framework.

\begin{definition}
A compact set $A_{\Sigma}\subset X$ is said to be the
{\it uniform global attractor} of the family of processes
$U_{\sigma }(t,\tau )$ if it is uniformly attracting and
is contained in
any compact uniformly attracting
set.
\end{definition}

According to the previous discussion, the attractor $A_\Sigma$ is also uniform with respect
to the choice of the initial time $\tau\in\R$.

\begin{remark}
It is actually possible to develop a theory of global attractors for locally
asymptotically compact semigroups (or processes), which cannot be
dissipative in the traditional sense (i.e.\ existence of a bounded absorbing set).
Still, one can prove
the existence of a unique locally compact global attractor.
In this case, the global attractor is defined to be the smallest closed (instead of compact) attracting set
(see \cite{CV1,CV2,CV}).
\end{remark}

\begin{proposition}
The family $U_{\sigma }(t,\tau )$ possesses at
most one uniform global attractor.
\end{proposition}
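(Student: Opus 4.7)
The plan is to exploit the minimality clause built into the definition of the uniform global attractor. The argument is a standard uniqueness-by-minimality argument: two objects that are each minimal in the same partially ordered collection (here, the collection $\K_\Sigma$ ordered by inclusion) must coincide.

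More precisely, suppose for contradiction, or just for argument's sake, that $A_\Sigma$ and $A'_\Sigma$ are both uniform global attractors of the family $U_\sigma(t,\tau)$. By the definition, each of them is a compact uniformly attracting set, i.e.\ both belong to the collection $\K_\Sigma$. Also by the definition, each of them is contained in every compact uniformly attracting set. First I would apply the minimality of $A_\Sigma$ to the candidate compact uniformly attracting set $A'_\Sigma\in\K_\Sigma$, obtaining $A_\Sigma\subset A'_\Sigma$. Then I would apply the symmetric argument, using the minimality of $A'_\Sigma$ against $A_\Sigma\in\K_\Sigma$, to obtain $A'_\Sigma\subset A_\Sigma$. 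Combining the two inclusions yields $A_\Sigma=A'_\Sigma$, which is exactly the desired uniqueness.

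There is really no obstacle here, since everything is packed into the definition; the statement is a formal consequence of the fact that ``being the uniform global attractor'' requires simultaneously membership in $\K_\Sigma$ and minimality in $\K_\Sigma$. I would not invoke Proposition \ref{CPT}, Lemma \ref{DP}, or any property of $A_\Sigma^\star$ at this stage: these become relevant for the \emph{existence} result, whereas uniqueness is purely order-theoretic. The only mild subtlety worth pointing out in the write-up is that one must notice that a uniform global attractor is itself an element of $\K_\Sigma$ (being compact and uniformly attracting by definition), so it is legitimate to plug each one into the minimality property of the other.
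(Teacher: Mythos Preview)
Your argument is correct and is in fact more direct than the paper's. The paper proceeds by contradiction: assuming two distinct uniform global attractors $A_1\neq A_2$, it invokes Corollary~\ref{CPTcor} to conclude that $A_1\cap A_2\in\K_\Sigma$, and then appeals to minimality to reach a contradiction. Your version bypasses Corollary~\ref{CPTcor} entirely by observing that each attractor, being itself an element of $\K_\Sigma$, can be fed directly into the minimality clause of the other, yielding the two inclusions $A_\Sigma\subset A'_\Sigma$ and $A'_\Sigma\subset A_\Sigma$ at once. This is cleaner, since the intersection detour is not needed for uniqueness; the paper's route does, however, illustrate that $\K_\Sigma$ is closed under finite intersections, a structural fact that may be of independent interest.
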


\begin{proof}
By contradiction, suppose not. Then, by virtue of Corollary \ref{CPTcor}, the intersection
of two different uniform global attractors belongs to
$\K_{\Sigma}$, which contradicts the minimality property.
\end{proof}

\begin{remark}
For any $\Sigma_0\subset\Sigma$ we have the inclusion $A_{\Sigma_0}\subset A_{\Sigma}$
where $\Sigma_0$ is the symbol space of the subfamily
$\{U_{\sigma }(t,\tau )\}_{\sigma\in\Sigma_0}$
and $A_{\Sigma_0}$ is the uniform global
attractor of this subfamily. In particular,
$A_{\{\sigma\}}\subset A_{\Sigma}$ for any fixed $\sigma\in\Sigma$.
\end{remark}

The main existence result
for the uniform global
attractor reads as follows.

\begin{theorem}
\label{uac}
If the family $U_{\sigma}(t,\tau)$ is uniformly
asymptotically compact, then it possesses the global attractor $A_{\Sigma}$
which coincides with the set $A_{\Sigma }^{\star}$.
\end{theorem}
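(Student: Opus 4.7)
The plan is to observe that essentially all the work has already been done in Proposition \ref{proproia} and Lemma \ref{DP}(i), and the proof of Theorem \ref{uac} reduces to assembling these two facts.

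First I would invoke Proposition \ref{proproia}, which under the uniform asymptotic compactness assumption yields $A_\Sigma^\star\in\K_\Sigma$. This already gives that $A_\Sigma^\star$ is compact, nonempty, and uniformly attracting, so it satisfies the first half of the definition of a uniform global attractor.

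Next I would verify the minimality property. Let $K$ be any compact uniformly attracting set. In a metric space, compact sets are automatically closed, so $K$ is a closed uniformly attracting set. Hence Lemma \ref{DP}(i) applies and gives the inclusion $A_\Sigma^\star\subset K$. Combining this with the previous step shows that $A_\Sigma^\star$ is contained in every element of $\K_\Sigma$, so $A_\Sigma^\star$ coincides with the uniform global attractor $A_\Sigma$.

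There is essentially no obstacle here: the content is packed into Proposition \ref{proproia} (existence of $A_\Sigma^\star$ as a compact uniformly attracting set) and the Lemma \ref{DP}(i) (minimality among closed uniformly attracting sets). The only tiny point to mention explicitly is that compact $\Rightarrow$ closed in a metric space, which bridges the two statements. Uniqueness of the attractor is not part of the assertion to be proved here, having been established in the previous proposition.
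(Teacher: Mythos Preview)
Your proposal is correct and matches the paper's own proof essentially verbatim: invoke Proposition~\ref{proproia} to get $A_\Sigma^\star\in\K_\Sigma$, then use Lemma~\ref{DP}(i) for minimality. Your explicit remark that compact $\Rightarrow$ closed in a metric space is the only addition, and it merely makes precise the bridge the paper leaves implicit.
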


\begin{proof}
By Proposition \ref{proproia}, we already know that $A_{\Sigma}^{\star}\in \K_{\Sigma}$.
Then, we infer from point (i) of Lemma \ref{DP} that $A_{\Sigma }^{\star}$
is contained in any compact
uniformly attracting set, and hence it is the uniform global attractor.
\end{proof}

Therefore, having a concrete family of processes, the main problem is to
construct at least one compact uniformly attracting set. Such a task
can be, in general, extremely difficult. However,
if the underlying metric space $X$ is complete, there is a more effective way
to express asymptotic compactness. We need first a definition.

\begin{definition}
The family $U_{\sigma }(t,\tau )$
is called {\it uniformly $\eps$-dissipative} if there
exists a finite $\eps$-absorbing set. If the family is
uniformly $\eps$-dissipative for all $\eps>0$,
then it is called {\it totally uniformly dissipative}.
\end{definition}

\begin{remark}
It is readily seen that the family $U_{\sigma }(t,\tau )$
is totally uniformly dissipative if and only if
there is a bounded uniformly absorbing set $B$
for which
$$
\lim_{t-\tau \to \infty }\,\Big[\sup_{\sigma \in \Sigma}\,
\alpha(U_{\sigma }(t,\tau )B)\Big]=0,
$$
where
$$\alpha(C)=\inf\big\{d: \text{$C$ has a finite cover of balls of
$X$ of diameter less than $d$}\big\}$$
denotes the
{\it Kuratowski measure of noncompactness}
of a bounded
set $C\subset X$ (see \cite{HAL} for more details on $\alpha$).
\end{remark}

\begin{theorem}
Let $X$ be a complete metric space. Then the family of processes $U_{\sigma}(t,\tau)$
is uniformly asymptotically compact if and only if
it is totally uniformly dissipative.
\end{theorem}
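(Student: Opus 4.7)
The plan is to handle the two implications separately.

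Necessity is essentially an $\eps$-net argument. Assuming $K\in\K_\Sigma$, for every $\eps>0$ the compactness of $K$ produces a finite set $F_\eps\subset K$ with $K\subset\U_{\eps/2}(F_\eps)$. By uniform attraction, every bounded $C\subset X$ eventually satisfies $U_\sigma(t,\tau)C\subset\U_{\eps/2}(K)\subset\U_\eps(F_\eps)$, uniformly in $\sigma$ and $\tau$, so $F_\eps$ is a finite $\eps$-absorbing set. Arbitrariness of $\eps$ yields total uniform dissipativity.

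For the converse I would take $A_\Sigma^{\star}$ itself as the candidate compact uniformly attracting set. First observe that total uniform dissipativity automatically implies uniform dissipativity, since the $\eps$-neighborhood of any finite $\eps$-absorbing set is a bounded uniformly absorbing set $B$. Hence Lemma \ref{DP}(iii) applies and gives $A_\Sigma^{\star}=\omega_\Sigma(B)$; in particular $A_\Sigma^{\star}$ is closed. In view of Proposition \ref{CPT}, it then suffices to verify that $\emptyset\neq L_\Sigma(y_n)\subset A_\Sigma^{\star}$ for every $y_n\in\mathfrak{C}_\Sigma$, the inclusion being immediate from the definition of $A_\Sigma^{\star}$.

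The core step is therefore the non-emptiness of $L_\Sigma(y_n)$. Given $y_n=U_{\sigma_n}(t_n,\tau_n)x_n$ with $\{x_n\}$ bounded and $t_n-\tau_n\to\infty$, for each $k\in\N$ I would pick a finite $\tfrac{1}{k}$-absorbing set $F_k$, with entering time $t_k^{\ast}$ depending only on the bounded set $\{x_n\}$. Since the $t_k^{\ast}$ are independent of $\sigma_n$ and $\tau_n$, eventually every $y_n$ lies in $\U_{1/k}(F_k)$; the pigeon-hole principle over the finite set $F_k$ then yields a subsequence trapped in a single ball of radius $\tfrac{1}{k}$, and a standard diagonal extraction produces a Cauchy subsequence of $y_n$. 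Completeness of $X$ supplies a limit, so $L_\Sigma(y_n)\neq\emptyset$.

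Finally, the same nets $F_k$ witness total boundedness of $A_\Sigma^{\star}$: any $x\in A_\Sigma^{\star}$ is the limit of some $y_n\in\mathfrak{C}_\Sigma$ eventually lying in $\U_{1/k}(F_k)$, hence $A_\Sigma^{\star}$ admits a finite $\tfrac{2}{k}$-net for every $k$. Combined with closedness and the completeness of $X$, this gives compactness of $A_\Sigma^{\star}$, and Proposition \ref{CPT} closes the argument. I expect the Cauchy extraction to be the main technical hurdle; what makes it go through is precisely the uniformity in $\sigma$ and $\tau$ built into the definition of total dissipativity, since otherwise pigeon-holing over a sequence of symbols $\sigma_n$ and initial times $\tau_n$ would not collapse to a single fixed cover.
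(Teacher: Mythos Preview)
Your proof is correct. The forward implication is identical to the paper's. For the converse, the paper takes a slightly different but equally short route: rather than working with $A_\Sigma^{\star}$, it builds an auxiliary compact set
\[
K=\bigcap_{\eps>0}\overline{\U_\eps(M_\eps)},
\]
where $M_\eps$ is a finite $\eps$-absorbing set, and then applies Proposition~\ref{CPT} to $K$ directly. The core analytic step is the same as yours---any $y_n\in\mathfrak{C}_\Sigma$ is totally bounded (being eventually in each $\overline{\U_\eps(M_\eps)}$), hence has a convergent subsequence by completeness---but the paper packages it without the explicit pigeon-hole/diagonal extraction and without invoking Lemma~\ref{DP}(iii) for closedness. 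Your route has the small conceptual bonus of producing the minimal compact attracting set $A_\Sigma^{\star}$ itself rather than a possibly larger $K$; the paper's route is marginally leaner because compactness of $K$ is immediate from its construction, whereas you must separately argue total boundedness and closedness of $A_\Sigma^{\star}$.
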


\begin{proof}
If a uniformly attracting set $K$ is compact, then, for any $\eps>0,$
it has an $\eps $-net $M_{\eps }=\{x_{1},\ldots ,x_{N_\eps}\}$ and,
therefore, the finite set $M_{\eps}$ is uniformly $\eps$-absorbing. Thus, the family
$U_{\sigma }(t,\tau )$ is totally uniformly dissipative.
To show the converse implication, for every $\eps>0,$
let $M_\eps$ be a finite set such that $\U_\eps(M_\eps)$ is absorbing.
We denote
$$K=\bigcap_{\eps>0}B_{\eps }\qquad\text{where}\qquad B_\eps=\overline{\U_\eps(M_\eps)}.$$
The set $K$ is clearly
compact since it is closed and each $M_{\eps}$ is a finite $\eps$-net of $K$.
Consider an arbitrary $y_{n}\in \mathfrak{C}_{\Sigma}$.
The sequence $y_{n}$ is totally bounded since, for every $\eps>0,$ the
set $B_{\eps}$ is uniformly absorbing and therefore $y_{n}\in B_{\eps}$
for sufficiently large $n$ (depending on $\eps$). Hence, $y_{n}$ is precompact
and, since $X$ is complete, the set $L_{\Sigma }(y_{n})$ is nonempty.
Moreover, $L_{\Sigma }(y_{n})\subset B_{\eps}$ for each $\eps>0$, hence,
$$L_{\Sigma}(y_{n})\subset \bigcap_{\eps>0}B_{\eps }=K\quad\Rightarrow\quad K\neq\emptyset.$$
By  Proposition \ref{CPT} we conclude
that the compact set $K$ is uniformly attracting, i.e.\  $K\in\K_\Sigma$
and $U_{\sigma }(t,\tau )$ is uniformly asymptotically compact.
\end{proof}

In conclusion, having a family of processes on a complete metric
space, in order to construct global attractors we only need to prove the total
uniform dissipation property. No continuity assumptions on the processes are required.

\begin{remark}
Let $X$ be a Banach space, and let the family of
processes $U_{\sigma }(t,\tau)$ be uniformly
dissipative, with a bounded
uniformly absorbing set $B$. Then a sufficient condition
for $U_{\sigma }(t,\tau)$ to be totally uniformly dissipative is the following:
for every fixed $\eps>0$ there exist a decomposition $X=Y\oplus Z$
with ${\rm dim}(Y)<\infty$
and a time $t_\star>0$ such that
$$
\sup_{\sigma\in\Sigma}\,\sup_{x\in B}\,\|U_{\sigma }(t,\tau)x-\Pi_Y U_{\sigma}(t,\tau )x\|
<\eps
$$
whenever $t-\tau\geq t_\star$,
where $\Pi_Y$ is the canonical projection of $X$ onto $Y$.
In concrete situations, this condition can be verified by means
of a standard Galerkin approximation scheme.
\end{remark}

We conclude the section by discussing the following problem. Assume we are given another
metric space $\Sigma_0\subset \Sigma$.
Assume also that the subfamily of processes $\{U_{\sigma }(t,\tau )\}_{\sigma\in\Sigma_0}$
has a uniformly (with respect to $\sigma\in\Sigma_0$) attracting set $K$.
The question is now which
conditions guarantee that $K$ is uniformly attracting for the whole
family $U_{\sigma }(t,\tau)$.

\begin{proposition}
\label{propmipiace}
Let the embedding $\Sigma_0\subset \Sigma$ be dense,
and suppose that, for every bounded set $C\subset X$, there exists
$t_C\geq 0$ such that the map
$$\sigma\mapsto U_{\sigma}(t,\tau )x:\Sigma\to X$$
is continuous for any fixed $x\in C$ and every  $t-\tau\geq t_C$. Then any
uniformly attracting set $K$ for the subfamily $\{U_{\sigma}(t,\tau)\}_{\sigma\in\Sigma_0}$
is uniformly attracting for $U_{\sigma}(t,\tau)$ as well.
\end{proposition}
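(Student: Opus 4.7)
The plan is to reduce the claim to a simple three-step triangle-inequality argument: for each point $x\in C$ at each time $(t,\tau)$, approximate the unknown symbol $\sigma\in\Sigma$ by a nearby symbol $\sigma'\in\Sigma_0$ (using density), then pay for the switch $U_{\sigma'}\leftrightarrow U_\sigma$ by the pointwise continuity hypothesis, and cash in the attraction property along $\Sigma_0$.

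In detail, I would fix a bounded set $C\subset X$ and a threshold $\eps>0$, and aim to find $h\geq 0$ such that $\d_X(U_\sigma(t,\tau)C,K)<\eps$ for every $\sigma\in\Sigma$ and every $t-\tau\geq h$. First, since $K$ is uniformly attracting for the subfamily $\{U_{\sigma}(t,\tau)\}_{\sigma\in\Sigma_0}$, there exists $h_0\geq 0$ such that
$$\dist\bigl(U_{\sigma'}(t,\tau)x,\,K\bigr)<\tfrac{\eps}{2}\quad\forall\sigma'\in\Sigma_0,\ \forall x\in C,\ \forall t-\tau\geq h_0.$$
Then I would set $h=\max\{h_0,t_C\}$, where $t_C$ is furnished by the continuity hypothesis.

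Next, for an arbitrary $\sigma\in\Sigma$, $x\in C$ and $t-\tau\geq h$, the map $\sigma\mapsto U_\sigma(t,\tau)x$ is continuous at $\sigma$, so there is $\delta=\delta(x,t,\tau,\sigma,\eps)>0$ such that
$$\dist\bigl(U_\sigma(t,\tau)x,\,U_{\sigma'}(t,\tau)x\bigr)<\tfrac{\eps}{2}$$
whenever $\sigma'\in\Sigma$ lies $\delta$-close to $\sigma$. By density of $\Sigma_0$ in $\Sigma$, pick such a $\sigma'\in\Sigma_0$; note that we do not need a uniform choice of $\sigma'$ in $x$, $t$ or $\tau$, because the final bound we seek is itself pointwise in $x$. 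A single application of the triangle inequality then yields
$$\dist\bigl(U_\sigma(t,\tau)x,\,K\bigr)\leq\dist\bigl(U_\sigma(t,\tau)x,\,U_{\sigma'}(t,\tau)x\bigr)+\dist\bigl(U_{\sigma'}(t,\tau)x,\,K\bigr)<\eps,$$
and taking the supremum over $x\in C$ gives $\d_X(U_\sigma(t,\tau)C,K)\leq\eps$, as required.

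There is no serious obstacle: the only point to keep an eye on is that the continuity is merely pointwise in $x$ and pointwise in $\sigma$, so the approximating symbol $\sigma'$ depends on every parameter in sight. The argument works precisely because the attraction target $\eps$ does not need to be achieved uniformly in $\sigma'$ — the $\sigma'$ is just an auxiliary witness, discarded as soon as the triangle inequality is invoked, whereas the estimate on $U_{\sigma'}(t,\tau)x$ is already uniform over all of $\Sigma_0$ by hypothesis.
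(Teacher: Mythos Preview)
Your proof is correct and follows essentially the same approach as the paper: both arguments fix $C$ and $\eps$, take the entering time to be the maximum of $t_C$ and the attraction time over $\Sigma_0$, and then for each $x\in C$ and $\sigma\in\Sigma$ approximate $\sigma$ by an element of $\Sigma_0$ using density plus pointwise continuity. The only cosmetic difference is that the paper phrases the approximation via a sequence $\sigma_n\to\sigma_\star$ and passes to the limit (landing in $\U_{2\eps}(K)$), whereas you use the equivalent $\eps/2$--$\delta$ formulation and an explicit triangle inequality.
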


\begin{proof}
Let $C\subset X$ be a bounded set, and let $\eps>0$ be arbitrarily fixed.
Since $K$
is uniformly attracting for $\{U_{\sigma}(t,\tau)\}_{\sigma\in\Sigma_0}$,
there is an entering time $t_\e=t_\e(\varepsilon,C)>0$ such that
$$
U_{\sigma}(t,\tau)C\subset \U_\eps(K),\quad\forall\sigma\in\Sigma_0,
$$
whenever $t-\tau\geq t_\e$.
Since $\Sigma_0$ is dense in $\Sigma$, given $\sigma_\star\in\Sigma$ there
is a sequence $\sigma_{n}\in\Sigma_0$ such that $\sigma_{n}\to\sigma_\star$.
In turn, this yields the convergence
$$U_{\sigma_n}(t,\tau )x\to U_{\sigma_\star}(t,\tau)x$$
for any fixed $x\in C$ and $t-\tau\geq t_C$. Consequently,
$$
U_{\sigma_\star}(t,\tau)C\subset \U_{2\eps}(K),
$$
for every $t-\tau\geq t_\star$, where $t_\star=\max\{t_\e,t_C\}$.
This tells that $K$
is actually uniformly attracting for the whole family.
\end{proof}

\begin{corollary}
\label{corry}
Let the hypotheses of Proposition \ref{propmipiace} hold.
Then the global attractors of both families of processes with
symbol spaces $\Sigma_{0}$ and $\Sigma $ coincide.
\end{corollary}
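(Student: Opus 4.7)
The plan is to combine Proposition \ref{propmipiace} with the minimality characterization of the uniform global attractor and the inclusion from the earlier remark, which states $A_{\Sigma_0}\subset A_{\Sigma}$ whenever $\Sigma_0\subset \Sigma$. So only the reverse inclusion $A_\Sigma \subset A_{\Sigma_0}$ must be established, and implicitly the existence of $A_\Sigma$.

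First I would take for granted the existence of $A_{\Sigma_0}$ (this is really the standing hypothesis, since the statement concerns the two attractors). By definition, $A_{\Sigma_0}$ is compact and uniformly attracting for the subfamily $\{U_\sigma(t,\tau)\}_{\sigma\in\Sigma_0}$. Applying Proposition \ref{propmipiace} with the choice $K=A_{\Sigma_0}$, and using the density of $\Sigma_0$ in $\Sigma$ together with the continuity hypothesis on $\sigma\mapsto U_\sigma(t,\tau)x$, I would conclude that $A_{\Sigma_0}$ is in fact uniformly attracting for the whole family $U_\sigma(t,\tau)$.

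At this point the family $\{U_\sigma(t,\tau)\}_{\sigma\in\Sigma}$ possesses a compact uniformly attracting set, namely $A_{\Sigma_0}$, so it is uniformly asymptotically compact and Theorem \ref{uac} guarantees the existence of its uniform global attractor $A_\Sigma$. By the minimality property of $A_\Sigma$ among compact uniformly attracting sets, we immediately obtain $A_\Sigma \subset A_{\Sigma_0}$. Combined with the reverse inclusion $A_{\Sigma_0}\subset A_\Sigma$ recalled above, the conclusion $A_\Sigma = A_{\Sigma_0}$ follows.

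There is no real obstacle here: the corollary is essentially a packaged consequence of Proposition \ref{propmipiace}, the minimality defining the uniform global attractor, and the monotonicity of $\Sigma\mapsto A_\Sigma$ under inclusion of symbol spaces. The only mild subtlety worth flagging explicitly in the write-up is that one uses Proposition \ref{propmipiace} precisely to promote the compact attracting set of the smaller family into an attracting set for the larger one, which is what unlocks Theorem \ref{uac} and the minimality comparison.
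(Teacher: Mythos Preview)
Your argument is correct and is exactly the intended one: the paper states Corollary~\ref{corry} without proof precisely because it follows at once from Proposition~\ref{propmipiace} (applied with $K=A_{\Sigma_0}$), the minimality defining $A_\Sigma$, and the monotonicity remark $A_{\Sigma_0}\subset A_\Sigma$. Your observation that this also yields the existence of $A_\Sigma$ via Theorem~\ref{uac} is a helpful clarification.
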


\section{The Skew-Product Semigroup}

\noindent
Throughout the end of the paper, we will consider
a particular but at the same time very typical situation.

\subsection{General assumptions}
\label{GAss}
Let $\Sigma$ be a compact metric space, and let
$$T(h):\Sigma\to\Sigma,\quad h\geq 0,$$
be a semigroup under whose action $\Sigma$ is fully invariant, i.e.\
$$T(h)\Sigma=\Sigma,\quad\forall h\geq 0.$$
Besides, let
the translation property
\begin{equation}
\label{transl}
U_{\sigma }(h+t,h+\tau )=U_{T(h)\sigma }(t,\tau)
\end{equation}
hold for every $\sigma\in\Sigma$ and every $h\geq 0$ and $t\geq \tau$.
In which case (see \cite{CV1,CV2,CV}), it is easy to verify that the map
\begin{equation}
\label{semigeppo}
S(h)(x,\sigma)=(U_\sigma(h,0)x,T(h)\sigma),\quad h\geq 0,
\end{equation}
defines a (skew-product) semigroup acting on the metric space
$$\boldsymbol{X}=X\times\Sigma.$$

\subsection{Global attractors of semigroups}
Before stating the main result of the section, we recall some facts on abstract semigroups.
Let
$$V(h):Y\to Y,\quad h\geq 0,$$
be a semigroup acting on a (not necessarily complete) metric space $Y$.

\begin{definition}
The semigroup $V(h)$ is said to be {\it asymptotically compact} if there exists a compact attracting set, namely,
a compact set $K\subset Y$ such that
$$\lim_{h\to \infty }\,
\d_Y(V(h)C,K)=0,
$$
for every bounded set $C\subset Y$, where $\d_Y$ denotes the Hausdorff semidistance in $Y$.
\end{definition}

The main theorem in \cite{CCP} reads as follows.

\begin{theorem}
\label{THMminimalia}
If the semigroup $V(h)$ is
asymptotically compact, then there exists the minimal (i.e.\ smallest) compact attracting set $A$,
called the {\it global attractor} of $V(h)$.
\end{theorem}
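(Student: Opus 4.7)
The plan is to mimic the construction carried out for families of processes in Sections 2--3, specializing to the autonomous case where the symbol space is a single point. All the tools have already been laid out (translated to the $V(h)$ setting): sequences $y_n = V(h_n)x_n$ with $x_n$ bounded and $h_n \to \infty$, the sets $L(y_n)$ of subsequential limits, the total union $A^\star = \bigcup L(y_n)$, and the $\omega$-limit $\omega(C) = \bigcap_{h \ge 0} \overline{\bigcup_{s \ge h} V(s)C}$. The claim is that $A^\star$ is the desired minimal compact attracting set.

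\textbf{Construction and main properties.} First I observe that asymptotic compactness furnishes a compact attracting set $K$, and therefore a bounded absorbing set (for example any bounded open neighborhood of $K$), so $V(h)$ is in particular dissipative. Next, by the semigroup analog of Proposition \ref{CPT}, the compact set $K$ attracts if and only if $\emptyset \neq L(y_n) \subset K$ for every admissible sequence $y_n$; this is proved exactly as there, using that $\dist(y_n,K) \to 0$ together with compactness of $K$ to extract a convergent subsequence. In particular $A^\star$ is nonempty and contained in $K$. The analog of Lemma \ref{DP}(i) shows $A^\star \subset K'$ for every closed attracting $K'$, which will give minimality once $A^\star$ is shown to attract.

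\textbf{Closedness, compactness, and attraction.} To upgrade $A^\star$ to a compact attracting set I first identify it with an $\omega$-limit. Following the argument of Lemma \ref{DP}(iii), if $B$ is a bounded absorbing set then for any bounded $C$ one has $\omega(C) \subset \omega(B)$, and the general identity $A^\star = \bigcup_C \omega(C)$ forces $A^\star = \omega(B)$. Being an intersection of closed sets, $\omega(B)$ is closed, and being contained in the compact set $K$ it is compact. Finally I apply the semigroup analog of Proposition \ref{CPT} in reverse: since $L(y_n) \subset A^\star$ is nonempty for every $y_n \in \mathfrak{C}$, the compact set $A^\star$ attracts every bounded set. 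Minimality is then immediate, since $A^\star$ lies inside every closed (hence every compact) attracting set; uniqueness follows as in the proof right after Theorem \ref{uac}, via the analog of Corollary \ref{CPTcor} applied to intersections.

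\textbf{Expected difficulty.} The routine parts are the algebra of $\omega$-limits and the dissipativity bootstrapping from the mere existence of a compact attracting set. The main subtlety—and the step one must get exactly right—is the bidirectional use of the compactness criterion of Proposition \ref{CPT}: one direction is needed to conclude that $L(y_n)$ is nonempty (using the given $K$), while the other direction is needed to conclude that $A^\star$ itself attracts. Both pass through the completeness-free fact that $K$ compact and $\dist(y_n,K) \to 0$ suffice to produce a convergent subsequence of $y_n$ in $Y$, a point that requires a little care since $Y$ is not assumed complete.
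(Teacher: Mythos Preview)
The paper does not actually prove Theorem~\ref{THMminimalia}; it is quoted from \cite{CCP} as ``The main theorem in \cite{CCP} reads as follows.'' Your proposal is correct and follows precisely the template that the present paper carries out for families of processes in Proposition~\ref{proproia} and Theorem~\ref{uac} (which in turn parallels \cite{CCP}): build $A^\star$ as the union of subsequential limits, identify it with $\omega(B)$ for an absorbing $B$ to get closedness, use containment in the given compact attracting $K$ to get compactness, and apply the two directions of the compactness criterion (Proposition~\ref{CPT}) to obtain attraction and minimality. So your reconstruction matches both the paper's process-level argument and, by design, the cited semigroup result.
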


It is worth observing that such a notion of global attractor is  based only on the minimality
with respect to the attraction property, and does not require any continuity on the semigroup.
Indeed, contrary to the classical notion of attractor (see e.g.\ \cite{BV,HAL,HAR,SY,TEM}), $A$ may fail to be
fully invariant under the action of the semigroup (see examples in \cite{CCP}).

\subsection{The theorem}
Let the family $U_{\sigma}(t,\tau)$ be uniformly
asymptotically compact.\footnote{Since in concrete cases
the underlying space $X$ is usually complete, this is the same as uniformly totally dissipative.}
Then, by Theorem \ref{uac}, we know that
$U_{\sigma}(t,\tau)$ has the uniform global attractor $A_{\Sigma}\subset X$.
It is also clear from Theorem \ref{THMminimalia} that the semigroup
$T(h)$ possesses the global attractor which coincides with the whole
phase space $\Sigma$.

\begin{theorem}
\label{MAIN}
Within the assumptions above, the skew-product semigroup $S(h)$ on $\boldsymbol{X}$
has a (unique) global attractor $\boldsymbol{A}$. Besides, we have the equalities
$$\Pi_1\boldsymbol{A}=A_\Sigma\qquad\text{and}\qquad \Pi_2\boldsymbol{A}=\Sigma,$$
where $\Pi_1$ and $\Pi_2$ denote the canonical projections of $\boldsymbol{X}$ onto its components $X$
and $\Sigma$, respectively.
\end{theorem}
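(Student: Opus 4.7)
The plan is to apply Theorem \ref{THMminimalia} to the skew-product semigroup $S(h)$ after exhibiting a compact attracting set in $\boldsymbol{X}$, and then to extract the two projection identities from the specific form \eqref{semigeppo} of $S(h)$ combined with the translation property \eqref{transl} and the full invariance $T(h)\Sigma=\Sigma$.

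The natural candidate for a compact attracting set in $\boldsymbol{X}$ is $\boldsymbol{K}=A_\Sigma\times\Sigma$, which is compact as a product of compacts. Equipping $\boldsymbol{X}$ with the sum metric (so that both canonical projections are $1$-Lipschitz), any bounded $\boldsymbol{C}\subset\boldsymbol{X}$ has bounded first component $C=\Pi_1\boldsymbol{C}\subset X$, and one estimates
$$
\d_{\boldsymbol{X}}(S(h)\boldsymbol{C},\boldsymbol{K})\leq\sup_{\sigma\in\Sigma}\,\d_X\bigl(U_\sigma(h,0)C,A_\Sigma\bigr),
$$
which vanishes as $h\to\infty$ by uniform attraction of $A_\Sigma$ applied with $\tau=0$. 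Hence $S(h)$ is asymptotically compact and Theorem \ref{THMminimalia} supplies the unique global attractor $\boldsymbol{A}\subset\boldsymbol{K}$, whence $\Pi_1\boldsymbol{A}\subset A_\Sigma$ and $\Pi_2\boldsymbol{A}\subset\Sigma$ for free.

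For the reverse inclusion $\Sigma\subset\Pi_2\boldsymbol{A}$, I would test the attraction of $\boldsymbol{A}$ against the bounded set $\boldsymbol{C}=C\times\Sigma$ for any fixed bounded $C\subset X$: since projection is $1$-Lipschitz, $\d_\Sigma(\Pi_2 S(h)\boldsymbol{C},\Pi_2\boldsymbol{A})\to 0$, while the full invariance gives $\Pi_2 S(h)\boldsymbol{C}=T(h)\Sigma=\Sigma$. Because $\Pi_2\boldsymbol{A}$ is closed in $\Sigma$, this forces $\Sigma\subset\Pi_2\boldsymbol{A}$, and hence equality.

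For the reverse inclusion $A_\Sigma\subset\Pi_1\boldsymbol{A}$, the strategy is to verify that the compact set $\Pi_1\boldsymbol{A}$ is uniformly attracting for the family $U_\sigma(t,\tau)$, so that the minimality of $A_\Sigma$ finishes the job. Applying the attraction of $\boldsymbol{A}$ to $C\times\Sigma$ yields at once
$$
\sup_{\sigma\in\Sigma}\,\d_X\bigl(U_\sigma(h,0)C,\Pi_1\boldsymbol{A}\bigr)\to 0,
$$
which only handles $\tau=0$. The main obstacle, and the place where the standing assumptions of \S\ref{GAss} come in crucially, is upgrading this to uniformity in $\tau\in\R$: for $\tau\geq 0$ the translation property directly gives $U_\sigma(t,\tau)=U_{T(\tau)\sigma}(t-\tau,0)$, while for $\tau<0$ one uses $T(-\tau)\Sigma=\Sigma$ to select $\sigma''\in\Sigma$ with $T(-\tau)\sigma''=\sigma$ and then reads $U_\sigma(t,\tau)=U_{\sigma''}(t-\tau,0)$ off \eqref{transl}. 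In either case $U_\sigma(t,\tau)C$ factors as $U_{\sigma''}(t-\tau,0)C$ for some $\sigma''\in\Sigma$, so the uniform-in-$\sigma$ attraction at initial time zero transfers to uniform-in-$(\sigma,\tau)$ attraction of $\Pi_1\boldsymbol{A}$, completing the verification.
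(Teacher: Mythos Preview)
Your proposal is correct and follows essentially the same route as the paper: exhibit $A_\Sigma\times\Sigma$ as a compact attracting set to obtain $\boldsymbol{A}$, then test attraction against $C\times\Sigma$ to get the two projection statements, using the translation property and the full invariance $T(h)\Sigma=\Sigma$ to upgrade the $\tau=0$ attraction of $\Pi_1\boldsymbol{A}$ to all $\tau\in\R$. The only cosmetic difference is that you split the last step into the cases $\tau\geq 0$ and $\tau<0$, whereas the paper handles both at once by picking $h_\star\geq -\tau$, writing $\sigma=T(h_\star)\sigma_\star$, and applying \eqref{transl} twice to reach $U_\sigma(t,\tau)C=U_{T(h_\star+\tau)\sigma_\star}(t-\tau,0)C$.
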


\begin{proof}
It is apparent from the definition \eqref{semigeppo} of
skew-product semigroup
that $A_\Sigma\times \Sigma$
is a (compact) attracting set for $S(h)$.
On account of Theorem \ref{THMminimalia},
this implies that $S(h)$
possesses the global attractor
$\boldsymbol{A}$.
Therefore, appealing to the minimality of $A_\Sigma$ and $\boldsymbol{A}$,
it is enough showing that
\begin{equation}
\label{Piuno}
\Pi_1\boldsymbol{A}\in\K_\Sigma\qquad\text{and}\qquad \Pi_2\boldsymbol{A}=\Sigma.
\end{equation}
Indeed, if $\Pi_1\boldsymbol{A}\in\K_\Sigma$ then $\Pi_1\boldsymbol{A}\supset A_\Sigma$.
On the other hand, being $A_\Sigma\times \Sigma$ compact attracting for $S(h)$, we also get
$$\boldsymbol{A}\subset A_\Sigma\times\Sigma\quad\Rightarrow\quad
\Pi_1\boldsymbol{A}\subset \Pi_1(A_\Sigma\times\Sigma)=A_\Sigma.$$
To see \eqref{Piuno}, the compactness of $\Pi_1\boldsymbol{A}$ being obvious,
let $C\subset X$ be bounded.
Then
$$\lim_{h\to\infty}\,\d_{\boldsymbol{X}}(S(h)(C\times \Sigma),\boldsymbol{A})=0.$$
Equivalently, we can write
$$
\sup_{\sigma\in\Sigma}\,\d_X(U_\sigma(h,0)C,\Pi_1\boldsymbol{A}) \to 0
\qquad\text{and}\qquad\d_\Sigma(T(h)\Sigma,\Pi_2\boldsymbol{A}) \to 0.
$$
The second convergence and the full invariance of $\Sigma$
readily yield the equality
$\Pi_2\boldsymbol{A}=\Sigma$.
We are left to prove the attraction property for $\Pi_1\boldsymbol{A}$.
Since $\Sigma$ is fully invariant for $T(h)$, for $h_\star>0$ to be chosen later we know
that, for any fixed $\sigma\in\Sigma$,
$$\sigma=T(h_\star)\sigma_\star\quad\text{for some}\,\,
\sigma_\star\in\Sigma.$$
Hence, exploiting \eqref{transl} and \eqref{semigeppo},
$$U_\sigma(t,\tau)C = U_{T(h_\star)\sigma_\star}(t,\tau)C
=U_{\sigma_\star}(h_\star+t,h_\star+\tau)C = U_{T(h_\star+\tau)\sigma_\star}(t-\tau, 0)C,$$
upon choosing $h_\star\geq -\tau$.
In light of the first convergence above, we conclude that
$$\sup_{\sigma\in\Sigma}\,
\d_X(U_\sigma(t,\tau)C,\Pi_1\boldsymbol{A}) \leq \sup_{\sigma\in\Sigma}\,
\d_X(U_\sigma(t-\tau,0)C,\Pi_1\boldsymbol{A})\to 0$$
as $t-\tau\to\infty$, proving that $\Pi_1\boldsymbol{A}$ is uniformly attracting for $U_\sigma(t,\tau)$.
\end{proof}

\section{Structure of the Attractor}

\noindent
We now proceed to analyze the structure of the uniform global attractor.
In some sense, this amounts to extend the notion of invariance, typical of semigroups, to dynamical processes.
We begin with two definitions.

\begin{definition}
Let $\sigma\in\Sigma$ be fixed. A function $s\mapsto x(s):\R\to X$  is a {\it complete bounded trajectory} ({\cbt})
of $U_\sigma(t,\tau)$ if and only if the set $\{x(s)\}_{s\in\R}$ is bounded in $X$ and
$$x(s)=U_\sigma(s,\tau)x(\tau),\quad \forall s\geq \tau,\,\forall\tau\in\R.$$
\end{definition}

\begin{definition}
For a fixed $\sigma\in\Sigma$, we call {\it kernel} of the single process $U_\sigma(t,\tau)$
with symbol $\sigma$ the collection of all its {\cbt}. The set
$$
{\mathcal K}_\sigma(t)=\big\{x(t):\,x(s)\text{ is a {\cbt} for }U_\sigma(t,\tau)\big\}
$$
is called the {\it kernel section} at time $t\in\R$.
\end{definition}

Within the framework of the previous section,
the following theorem holds.

\begin{theorem}
\label{THMgazzola}
Assume that there exists $h_\star>0$ such that the maps
$$(x,\sigma)\mapsto U_\sigma(h_\star,0)x:\boldsymbol{X}\to X\qquad\text{and}\qquad
\sigma\mapsto T(h_\star)\sigma:\Sigma\to\Sigma$$
are closed.\footnote{Recall that a map $f:Y\to Z$ is
closed if $f(y)=z$ whenever $y_n\to y$ and $f(y_n)\to z$.}
Then the uniform global attractor $A_\Sigma$ of the family $U_\sigma(t,\tau)$
coincides with the set
$${\mathcal K}_\Sigma=\bigcup_{\sigma\in\Sigma}{\mathcal K}_\sigma(0).$$
\end{theorem}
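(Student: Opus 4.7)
The plan is to prove both inclusions $\mathcal{K}_\Sigma\subset A_\Sigma$ and $A_\Sigma\subset\mathcal{K}_\Sigma$ separately; the second one is where the closedness hypothesis does the real work.

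For $\mathcal{K}_\Sigma\subset A_\Sigma$, I would argue directly from the characterization $A_\Sigma=A_\Sigma^\star$ provided by Theorem~\ref{uac}. If $x(\cdot)$ is a {\cbt} of $U_\sigma$, the identity $x(0)=U_\sigma(0,-n)x(-n)$ realizes the constant sequence $y_n=x(0)$ as an element of $\mathfrak{C}_\Sigma$, the bounded initial data being $x(-n)$ and $t_n-\tau_n=n\to\infty$. Hence $x(0)\in L_\Sigma(y_n)\subset A_\Sigma^\star=A_\Sigma$.

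For the reverse inclusion, I would lift the problem to the skew-product semigroup $S(h)$ on $\boldsymbol{X}=X\times\Sigma$, whose global attractor $\boldsymbol{A}$ satisfies $\Pi_1\boldsymbol{A}=A_\Sigma$ by Theorem~\ref{MAIN}. Given $x\in A_\Sigma$, choose $\sigma\in\Sigma$ with $(x,\sigma)\in\boldsymbol{A}$; the aim is to build a complete bounded orbit of $S$ through $(x,\sigma)$ at time $0$ and project its $X$-component to obtain a {\cbt} of $U_\sigma$. Observe that the map $S(h_\star)\colon\boldsymbol{X}\to\boldsymbol{X}$ is itself closed, as an immediate consequence of the two coordinate-wise closedness assumptions.

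The heart of the argument, and the main obstacle, is to establish the invariance $S(h_\star)\boldsymbol{A}=\boldsymbol{A}$, which is not automatic in this setup (recall the remark after Theorem~\ref{THMminimalia}: without continuity, the minimal compact attractor may fail to be invariant). I would recover it from closedness of $S(h_\star)$ combined with the uniform asymptotic compactness of $S$ inherited from $U_\sigma$. For $S(h_\star)\boldsymbol{A}\subset\boldsymbol{A}$: writing a generic $\boldsymbol{z}\in\boldsymbol{A}$ as $\boldsymbol{z}=\lim_n S(t_n)\boldsymbol{w}_n$ with $\boldsymbol{w}_n$ bounded and $t_n\to\infty$ (the semigroup analog of the characterization $A_\Sigma=A_\Sigma^\star$), asymptotic compactness applied to $S(t_n+h_\star)\boldsymbol{w}_n$ produces a subsequential limit $\boldsymbol{y}\in\boldsymbol{A}$, and closedness forces $S(h_\star)\boldsymbol{z}=\boldsymbol{y}$. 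For $\boldsymbol{A}\subset S(h_\star)\boldsymbol{A}$: splitting $S(t_n)\boldsymbol{w}_n=S(h_\star)S(t_n-h_\star)\boldsymbol{w}_n$, the attraction of the bounded set $\{\boldsymbol{w}_n\}$ by the compact $\boldsymbol{A}$ makes $S(t_n-h_\star)\boldsymbol{w}_n$ precompact with cluster values in $\boldsymbol{A}$, and any cluster point $\boldsymbol{v}\in\boldsymbol{A}$ satisfies $S(h_\star)\boldsymbol{v}=\boldsymbol{z}$ again by closedness.

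With invariance in hand, the trajectory is built by a pullback construction. Inductively select $\boldsymbol{z}_{-n}=(x_{-n},\sigma_{-n})\in\boldsymbol{A}$ with $S(h_\star)\boldsymbol{z}_{-n}=\boldsymbol{z}_{-(n-1)}$ and $\boldsymbol{z}_0=(x,\sigma)$; in particular $T(nh_\star)\sigma_{-n}=\sigma$ for every $n$. Using the translation formula \eqref{transl} with $h=nh_\star$, define
\begin{equation*}
x(s)=\begin{cases}U_\sigma(s,0)x,& s\geq 0,\\ U_{\sigma_{-n}}(s+nh_\star,0)x_{-n},& s\in[-nh_\star,-(n-1)h_\star].\end{cases}
\end{equation*}
Consistency at matching endpoints follows from $U_{\sigma_{-n}}(h_\star,0)x_{-n}=x_{-(n-1)}$, while the cocycle relation $x(s)=U_\sigma(s,\tau)x(\tau)$ for $s\geq\tau$ reduces to the identity $U_\sigma(s,\tau)=U_{\sigma_{-n}}(s+nh_\star,\tau+nh_\star)$ coming directly from \eqref{transl}. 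Boundedness of $\{x(s)\}_{s\in\R}$---the last technical point---follows for $s\geq 0$ from uniform dissipativity (the orbit is confined to the bounded uniformly absorbing set past a finite transient), and for $s\leq 0$ from the fact that each value lies in $\bigcup_{\sigma'\in\Sigma}U_{\sigma'}([0,h_\star],0)A_\Sigma$, bounded once more by applying uniform dissipativity to the bounded set $A_\Sigma$. This yields $x=x(0)\in\mathcal{K}_\sigma(0)\subset\mathcal{K}_\Sigma$, completing the proof.
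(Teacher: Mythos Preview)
Your overall strategy matches the paper's: both arguments pass to the skew-product semigroup $S(h)$, use closedness of $S(h_\star)$ to obtain invariance of $\boldsymbol{A}$, and then identify bounded complete trajectories of $S$ with those of the processes $U_\sigma$. The differences are organizational. The paper quotes the invariance of $\boldsymbol{A}$ as a black box from~\cite{CCP} and then proves \emph{both} inclusions by exhibiting a bijection between {\cbt}'s of $S$ and {\cbt}'s of the $U_\sigma$; you instead handle $\mathcal{K}_\Sigma\subset A_\Sigma$ directly via $A_\Sigma=A_\Sigma^\star$ (a nice shortcut), reprove the $S(h_\star)$-invariance of $\boldsymbol{A}$ from scratch, and assemble the {\cbt} by hand from the discrete backward orbit. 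Each approach buys something: the paper's is shorter and yields full invariance $S(h)\boldsymbol{A}=\boldsymbol{A}$ for every $h$, which immediately places the whole trajectory inside $A_\Sigma$; yours is self-contained and shows exactly where closedness enters.

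There is, however, a small but real gap in your boundedness step. You claim that $\bigcup_{\sigma'\in\Sigma}U_{\sigma'}([0,h_\star],0)A_\Sigma$ is bounded ``by applying uniform dissipativity to the bounded set $A_\Sigma$'', and similarly you dismiss the forward transient. Uniform dissipativity only controls $U_{\sigma'}(t,\tau)A_\Sigma$ once $t-\tau$ exceeds the entering time $t_\e(A_\Sigma)$; it says nothing about orbits on $[0,h_\star]$ or on $[0,t_\e]$. The fix is immediate with the tools you already have: since your telescoping gives $x(s)=U_{\sigma_{-N}}(s+Nh_\star,0)x_{-N}$ for \emph{every} $N$ with $Nh_\star\geq -s$, simply choose $N$ so large that $s+Nh_\star\geq t_\e(A_\Sigma)$; then $x(s)$ lands in the absorbing set $B$. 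This handles all $s\in\R$ at once. (In the paper this issue never arises because full invariance of $\boldsymbol{A}$ forces the entire trajectory $\boldsymbol{x}(s)$ to lie in $\boldsymbol{A}$, hence $x(s)\in\Pi_1\boldsymbol{A}=A_\Sigma$.)
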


In fact, the sets ${\mathcal K}_\sigma(0)$ in the statement can be replaced by ${\mathcal K}_\sigma(t)$
for any fixed $t\in\R$.

\begin{remark}
Since $\Sigma$ is compact, it is easy to see that $\sigma\mapsto T(h_\star)\sigma$ closed
actually implies that $T(h_\star)\in{\mathcal C}(\Sigma,\Sigma)$.
\end{remark}

\begin{proof}
We preliminary observe that the closedness assumptions of the theorem
imply that the semigroup $S(h)$
defined by \eqref{semigeppo} is also a closed map on $\boldsymbol{X}$ for $h=h_\star$.
This fact, due to a general result from~\cite{CCP}, is enough to ensure that the global attractor $\boldsymbol{A}$ is
fully invariant for $S(h)$.
In which case, it is well known (see e.g.\ \cite{HAR}) that $\boldsymbol{A}$ is characterized as
$$\boldsymbol{A}=\big\{\boldsymbol{x}(0):\,\boldsymbol{x}(s)\text{ is a {\cbt} for }S(h)\big\},
$$
where a {\cbt} for $S(h)$ is a bounded function $s\mapsto \boldsymbol{x}(s):\R\to \boldsymbol{X}$ such that
$$\boldsymbol{x}(h+s)=S(h)\boldsymbol{x}(s),\quad\forall h\geq 0,\,\forall s\in\R.$$
The same characterization clearly applies for the global
attractor $\Sigma$ of $T(h)$.
The proof now proceeds along the lines of Theorem IV.5.1 in \cite{CV}. For completeness, we report the details.

\smallskip
\noindent
$\bullet$ $\Pi_1\boldsymbol{A}\subset{\mathcal K}_\Sigma$.
Indeed, let
$$\boldsymbol{x}(s)=(x(s),\sigma(s))$$
be a {\cbt} of $S(h)$.
By the very definition of $S(h)$, this is the same as saying that $\sigma(s)$ is a {\cbt} of $T(h)$
(in particular, $\sigma(0)\in\Sigma$),
and
$$x(s)=U_{\sigma(\tau)}(s-\tau,0)x(\tau),\quad\forall s\geq \tau,\,\tau\in\R.$$
If $\tau\geq0$, setting $\sigma_0=\sigma(0)$ and using \eqref{transl}, we have the chain of equalities
$$U_{\sigma(\tau)}(s-\tau,0)x(\tau)=U_{T(\tau)\sigma_0}(s-\tau,0)x(\tau)=U_{\sigma_0}(s,\tau)x(\tau).$$
If $\tau<0$, then $T(-\tau)\sigma(\tau)=\sigma_0$ and using \eqref{transl} in the other direction
we end up with
$$U_{\sigma(\tau)}(s-\tau,0)x(\tau)
=U_{T(-\tau)\sigma(\tau)}(s,\tau)x(\tau)=U_{\sigma_0}(s,\tau)x(\tau).$$
This proves that $x(s)$ is a {\cbt} of
$U_{\sigma_0}(t,\tau)$.

\smallskip
\noindent
$\bullet$ $\Pi_1\boldsymbol{A}\supset{\mathcal K}_\Sigma$.
Let $x_0\in {\mathcal K}_\Sigma$. Then, there exist $\sigma_0\in\Sigma$ and a {\cbt} $x(s)$ of
the process
$U_{\sigma_0}(t,\tau)$ such that $x(0)=x_0$.
Since $\Sigma$ is fully invariant, there is a {\cbt} $\sigma(s)$ of $T(h)$
such that $\sigma(0)=\sigma_0$. We must show that $(x(s),\sigma(s))$ is a {\cbt} of $S(h)$.
Indeed, leaning again on the translation property \eqref{transl}, for $s\geq 0$ we get
\begin{align*}
S(h)(x(s),\sigma(s))
&=(U_{\sigma(s)}(h,0)x(s),T(h)\sigma(s))\\
&=(U_{T(s)\sigma_0}(h,0)x(s),\sigma(h+s))\\
&=(U_{\sigma_0}(h+s,s)x(s),\sigma(h+s))=(x(h+s),\sigma(h+s)).
\end{align*}
The case $s<0$ is similar and left to the reader.

\smallskip
\noindent
Since by Theorem \ref{MAIN} we know that $\Pi_1\boldsymbol{A}=A_\Sigma$, the proof is finished.
\end{proof}

\section{Asymptotically Closed Processes}

\noindent
The aim of this section is to extend the characterization Theorem \ref{THMgazzola}
to a more general class of processes.

\subsection{Asymptotically closed semigroups}
We first need a definition and a theorem from \cite{CCP}
about dynamical semigroups.

\begin{definition}
\label{DEFFY}
A semigroup $V(h)$ acting on a metric space $Y$ is said to be
{\it asymptotically closed} if there exists a sequence of times
$0=h _{0}<h_{1}<h_{2}<h_{3}\ldots $ with the
following property: whenever the convergence $V(h_{k})y_{n}\to \eta^{k}\in X$ occurs
as $n\to\infty$
for every $k\in\N$, we have the equalities
$$V(h_{k})\eta^{0}=\eta^{k},\quad\forall k\in\N.$$
\end{definition}

The sequence $h_k$ in the definition may be finite (but of at least two elements).
In fact, if it is made exactly of two elements $h_0=0$ and $h_1>0$,
then we recover the closedness of the map $V(h_1)$.
On the other hand, if $V(h_\star)$ is closed for some $h_\star>0$, it follows
that $V(h)$ is asymptotically closed with respect to the sequence $h_k=kh_\star$.
This shows that asymptotic closedness is a weaker property than closedness in one point.

\begin{remark}
When the metric space $Y$ is compact, by applying a standard diagonalization
method is immediate to verify that,
if $V(h)$ is asymptotically closed with respect to some sequence $h_k$, then
$$V(h_k)\in{\mathcal C}(Y,Y),\quad\forall k\in\N.$$
\end{remark}

The following theorem holds \cite{CCP}.

\begin{theorem}
\label{butano}
Let $V(h)$
have the global attractor $A$. If $V(h)$ is
asymptotically closed, then $A$ is
fully invariant under the action of the semigroup.
\end{theorem}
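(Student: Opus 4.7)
The plan is to show $V(h) A = A$ for every $h \geq 0$, first at the distinguished times $h_k$ supplied by asymptotic closedness, then at arbitrary $h$. The crucial starting point is the $\omega$-limit characterization of $A$: as the minimal compact attracting set, $A$ coincides with $\omega(B)$ for any bounded absorbing set $B$ (this is the semigroup analogue of Lemma \ref{DP}(iii)), so every $y \in A$ is of the form $y = \lim_n V(s_n) z_n$ with $z_n \in B$ and $s_n \to \infty$.

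For the invariance $V(h_k) A = A$ at the sequence times, I would argue by two diagonal extractions. To prove $V(h_k) A \subset A$, fix $y = \lim_n V(s_n) z_n \in A$ and observe that for each $j \geq 0$ the sequence $V(h_j) V(s_n) z_n = V(s_n + h_j) z_n$ is attracted to the compact set $A$, hence precompact. A diagonal extraction yields a subsequence along which $V(h_j) V(s_n) z_n \to \eta^j \in A$ for every $j$, with $\eta^0 = y$, and asymptotic closedness applied to $y_n := V(s_n) z_n$ then delivers $V(h_k) y = V(h_k)\eta^0 = \eta^k \in A$. The reverse inclusion $A \subset V(h_k) A$ is obtained symmetrically: for $y \in A$ and $s_n > h_k$, set $w_n := V(s_n - h_k) z_n$, so that $V(h_k) w_n = V(s_n) z_n \to y$; a diagonal extraction (again using attraction together with compactness of $A$) produces a subsequence with $w_n \to w \in A$ and $V(h_j) w_n \to \xi^j \in A$ for every $j$, so that $\xi^0 = w$ and $\xi^k = y$, and asymptotic closedness yields $V(h_k) w = \xi^k = y$, placing $y$ into $V(h_k) A$.

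The last step, extending $V(h_k) A = A$ to $V(h) A = A$ for all $h \geq 0$, is the main obstacle. I would do this by constructing, through each $y \in A$, a complete bounded trajectory of $V$ contained entirely in $A$: the forward orbit $\{V(h) y\}_{h \geq 0}$ remains precompact by attraction, and the backward extension uses the surjectivity $V(h_k) A = A$ at the grid times together with compactness to produce preimages at every negative time, with a Cantor-type selection ensuring the semigroup identity at all real times. Once each $y$ is known to lie on such a trajectory $x(\cdot) \subset A$, full invariance is immediate, since $V(h) y = x(h) \in A$ and $y = V(h) x(-h) \in V(h) A$ for any $h \geq 0$. The delicate point is precisely this patching: asymptotic closedness gives direct information only at the discrete times $h_k$, and upgrading to a continuous-time trajectory with the semigroup identity holding at every real parameter is the least routine component of the proof; the rest reduces cleanly to combinations of the $\omega$-limit characterization, attraction, compactness, and diagonal extraction.
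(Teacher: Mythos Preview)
The paper does not actually prove this theorem: it is quoted from \cite{CCP} and used as a black box. So there is no ``paper's proof'' to compare against directly, and your proposal has to be judged on its own.

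Your Steps 1 and 2 are correct. The $\omega$-limit description of $A$ is available in this framework, and your two diagonal-extraction arguments cleanly yield $V(h_k)A=A$ for every $k$, in particular $V(h_1)A=A$.

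The gap is in Step 3, but not where you locate it. You write that the ``delicate point'' is ensuring the semigroup identity at all real times and invoke a Cantor-type selection. In fact that part is trivial: having chosen preimages $y_{-n}\in A$ with $V(h_1)y_{-n}=y_{-(n-1)}$ and $y_0=y$ (a plain inductive choice, no diagonal selection needed), the formula
\[
x(s)=V(s+nh_1)\,y_{-n}\qquad (s\geq -nh_1)
\]
is consistent in $n$ by the semigroup law and automatically satisfies $x(s+h)=V(h)x(s)$ for all $h\geq 0$ and $s\in\R$. What is genuinely at stake is showing that this trajectory lies in $A$ (equivalently, is bounded). Your sentence ``the forward orbit remains precompact by attraction'' does not do this: attraction controls $V(h)y$ only for large $h$, and gives no information for $0\le h<h_1$.

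The missing observation is the following. For any fixed $s\in\R$ and every $n$ with $s+nh_1\geq 0$ one has $x(s)=V(s+nh_1)y_{-n}$ with $y_{-n}\in A$; since $A$ is bounded and uniformly attracted to itself,
\[
\dist\big(x(s),A\big)\le \d_X\big(V(s+nh_1)A,\,A\big)\xrightarrow[n\to\infty]{}0,
\]
while the left-hand side does not depend on $n$. Hence $x(s)\in A$ for every $s$. This single line gives $V(h)A\subset A$ for all $h\ge 0$; the reverse inclusion then follows immediately by writing $y=V(h)\,V(nh_1-h)y_{-n}$ with $V(nh_1-h)y_{-n}\in A$ by the positive invariance just obtained. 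No Cantor-type selection enters, and asymptotic closedness is used only in your Step~2.
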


\subsection{The theorem}
Hereafter, let the general assumptions \ref{GAss} hold.
Firstly, we extend Definition~\ref{DEFFY} to the case of a family of processes.

\begin{definition}
\label{DGM}
The family $U_{\sigma}(t,\tau)$ is said to be
{\it asymptotically closed} if there exists a sequence of times
$0=h _{0}<h_{1}<h_{2}<h_{3}\ldots $ with the
following property:
if
$$\sigma_n\to\sigma \in \Sigma\qquad\text{and}\qquad
U_{\sigma_n}(h_k,0)x_n\to\xi^k\in X$$
as $n\to\infty$ for every $k\in\N$, then we have the chain of equalities
$$
U_{\sigma}(h_k,0)\xi^{0}=\xi ^{k},\quad\forall k\in\N.
$$
\end{definition}

\begin{proposition}
\label{propano}
Let $U_{\sigma}(t,\tau)$ be asymptotically closed with respect to some sequence $h_k$
complying with Definition~\ref{DGM}, and let $T(h)$ be a continuous map\footnote{Since $\Sigma$ is compact,
we could equivalently ask $T(h)$ asymptotically closed with respect to $h_k$.} for all $h=h_k$.
Then the skew-product semigroup $S(h)$ is
also asymptotically closed with respect to $h_k$.
\end{proposition}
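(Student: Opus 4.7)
The plan is to unwind the definitions and verify the asymptotic closedness condition for $S(h)$ component by component, using the hypothesis on $U_\sigma$ for the $X$-component and the continuity of $T(h_k)$ for the $\Sigma$-component.

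I would begin by fixing an arbitrary sequence $(x_n,\sigma_n)\in\boldsymbol{X}$ and assuming that
$$S(h_k)(x_n,\sigma_n)\to(\xi^k,\tau^k)\in\boldsymbol{X}$$
as $n\to\infty$ for every $k\in\N$. By the definition \eqref{semigeppo} of the skew-product, this is equivalent to the two simultaneous convergences
$$U_{\sigma_n}(h_k,0)x_n\to\xi^k\qquad\text{and}\qquad T(h_k)\sigma_n\to\tau^k$$
for each $k$. Specializing to $k=0$ (where $h_0=0$) gives $x_n\to\xi^0$ and $\sigma_n\to\tau^0$; since $\Sigma$ is compact, $\tau^0\in\Sigma$, so we may set $\sigma:=\tau^0$.

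Next, I would apply the two hypotheses separately. On the $X$-side, the convergences $\sigma_n\to\sigma=\tau^0\in\Sigma$ and $U_{\sigma_n}(h_k,0)x_n\to\xi^k$ for every $k$ are exactly those required to trigger the asymptotic closedness of $U_\sigma(t,\tau)$ along $h_k$ (Definition \ref{DGM}), yielding
$$U_{\tau^0}(h_k,0)\xi^{0}=\xi^{k},\quad\forall k\in\N.$$
On the $\Sigma$-side, the continuity of $T(h_k)$ together with $\sigma_n\to\tau^0$ gives $T(h_k)\sigma_n\to T(h_k)\tau^0$, and comparing with the prescribed limit $T(h_k)\sigma_n\to\tau^k$ forces
$$T(h_k)\tau^0=\tau^k,\quad\forall k\in\N.$$

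Combining these two identities and applying \eqref{semigeppo} once more, I would conclude
$$S(h_k)(\xi^0,\tau^0)=\bigl(U_{\tau^0}(h_k,0)\xi^0,\,T(h_k)\tau^0\bigr)=(\xi^k,\tau^k),\quad\forall k\in\N,$$
which is precisely the asymptotic closedness of $S(h)$ with respect to the same sequence $h_k$. The argument is essentially a matching exercise between the two definitions, so there is no real obstacle; the only mildly delicate point is the justification that the limit symbol $\tau^0$ belongs to $\Sigma$ (needed to feed it back into the hypothesis on $U_\sigma$), which is handled at once by the compactness of $\Sigma$ assumed in \ref{GAss}.
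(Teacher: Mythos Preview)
Your proof is correct and follows essentially the same route as the paper's own argument: decompose the convergence of $S(h_k)(x_n,\sigma_n)$ into its two components, apply the asymptotic closedness of $U_\sigma$ on the $X$-side and the continuity of $T(h_k)$ on the $\Sigma$-side, then reassemble via \eqref{semigeppo}. One cosmetic remark: the limit $\tau^0$ lies in $\Sigma$ simply because $(\xi^0,\tau^0)\in\boldsymbol{X}=X\times\Sigma$ by assumption, so compactness of $\Sigma$ is not actually needed at that step (and the choice of the letter $\tau$ clashes with the time variable in $U_\sigma(t,\tau)$; the paper uses $\omega^k$ instead).
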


\begin{proof}
Assume that, for some
sequence $(x_n,\sigma_n)\in \boldsymbol{X}$, the convergence
$$
S(h_k)(x_{n},\sigma _{n})\rightarrow (\xi ^{k},\omega ^{k})\in \boldsymbol{X}
$$
holds for every $k\in\N$. By \eqref{semigeppo}, this translates into
$$
U_{\sigma_n}(h_k,0)x_n\to\xi^{k}\in X
\qquad\text{and}\qquad
T(h_k)\sigma_n\to\omega^{k}\in\Sigma.
$$
In particular,
$$\sigma_{n}\to\omega^{0},$$
and from the continuity of $T(h_k)$ we readily obtain
$$T(h_k)\omega^{0}=\omega^{k},$$
for every $k\in\N$.
Besides, appealing to the asymptotic closedness of
$U_{\sigma }(t,\tau )$, we also deduce the chain of equalities
$$
U_{\omega^{0}}(h_k,0)\xi^{0}=\xi^{k}.
$$
Hence, using \eqref{semigeppo} the other way around, we conclude that
$$
S(h_k)(\xi^{0},\omega^{0})=(\xi^{k},\omega^{k}).
$$
This proves the asymptotic closedness of $S(h)$.
\end{proof}

We are now ready to state the following
generalized version of Theorem~\ref{THMgazzola}.

\begin{theorem}
\label{theo3}
Let the family $U_{\sigma}(t,\tau )$ be uniformly asymptotically compact (or uniformly
totally dissipative if $X$ is complete). If $U_{\sigma}(t,\tau )$ is
asymptotically closed with respect to some sequence $h_k$ and $T(h_k)$ is continuous,
then
$$A_\Sigma=\bigcup_{\sigma\in\Sigma}{\mathcal K}_\sigma(0).$$
\end{theorem}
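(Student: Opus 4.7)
The plan is to run the same argument as in Theorem \ref{THMgazzola}, with the single-point closedness replaced by asymptotic closedness. The one genuinely new ingredient is transferring the hypotheses from $U_\sigma(t,\tau)$ to the skew-product semigroup $S(h)$, which is exactly what Proposition \ref{propano} provides for free under the assumption that $T(h_k)$ is continuous. So first I would invoke Proposition \ref{propano} to deduce that $S(h)$ is asymptotically closed with respect to the same sequence $h_k$, and then invoke Theorem \ref{butano} to conclude that the global attractor $\boldsymbol{A}$ of $S(h)$ (which exists by Theorem \ref{MAIN}) is fully invariant under $S(h)$.

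Once full invariance of $\boldsymbol{A}$ is in hand, the classical semigroup characterization (which holds for any semigroup whose global attractor is fully invariant, as noted in the proof of Theorem \ref{THMgazzola}) gives
$$\boldsymbol{A}=\big\{\boldsymbol{x}(0):\,\boldsymbol{x}(s)\text{ is a {\cbt} for }S(h)\big\}.$$
Then I would repeat verbatim the two inclusions from the proof of Theorem \ref{THMgazzola}. Namely, to show $\Pi_1\boldsymbol{A}\subset {\mathcal K}_\Sigma$, I take a {\cbt} $\boldsymbol{x}(s)=(x(s),\sigma(s))$ of $S(h)$, set $\sigma_0=\sigma(0)$, and use the translation property \eqref{transl} (separately for $\tau\geq 0$ and $\tau<0$, leveraging the fact that $\sigma(s)$ is itself a {\cbt} of $T(h)$ so $T(\tau)\sigma_0=\sigma(\tau)$ when $\tau\geq 0$ and $T(-\tau)\sigma(\tau)=\sigma_0$ when $\tau<0$) to check that $x(s)=U_{\sigma_0}(s,\tau)x(\tau)$ for all $s\geq\tau$, i.e.\ $x(s)$ is a {\cbt} of $U_{\sigma_0}(t,\tau)$. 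For the opposite inclusion, given $x_0\in{\mathcal K}_\Sigma$ and an associated {\cbt} $x(s)$ of $U_{\sigma_0}(t,\tau)$, I use the full invariance of $\Sigma$ to pick a {\cbt} $\sigma(s)$ of $T(h)$ with $\sigma(0)=\sigma_0$, and the same translation identity shows $(x(s),\sigma(s))$ is a {\cbt} of $S(h)$, so $(x_0,\sigma_0)\in\boldsymbol{A}$ and hence $x_0\in\Pi_1\boldsymbol{A}$.

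Finally, invoking the identity $\Pi_1\boldsymbol{A}=A_\Sigma$ from Theorem \ref{MAIN} closes the argument. The main conceptual obstacle, honestly, is already handled upstream: it is the passage from asymptotic closedness of the family (a hypothesis about single-point limits along a discrete time grid) to full invariance of the attractor of the skew-product, which is precisely the content of Proposition \ref{propano} combined with Theorem \ref{butano}. Once that machinery is assembled, the rest is a mechanical replay of Theorem \ref{THMgazzola}, and the only minor bookkeeping issue is being careful with the case distinction $\tau\geq 0$ vs.\ $\tau<0$ in the translation step.
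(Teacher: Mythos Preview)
Your proposal is correct and follows exactly the paper's own proof: invoke Proposition~\ref{propano} to get asymptotic closedness of $S(h)$, apply Theorem~\ref{butano} to obtain full invariance of $\boldsymbol{A}$, and then replay the argument of Theorem~\ref{THMgazzola}. The paper states this in a single sentence, but the content is identical to what you wrote out.
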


\begin{proof}
Indeed, we learn from Proposition~\ref{propano} that the skew-product
semigroup $S(h)$ on $\boldsymbol{X}$
is asymptotically closed with respect to $h_k$, hence Theorem~\ref{butano}
guarantees the full invariance of its
global attractor $\boldsymbol{A}$. At this point, the argument is the same as in the proof
of Theorem~\ref{THMgazzola}.
\end{proof}

\section{Differential Equations with Translation Compact Symbols}

\noindent
We finally apply the results to the study of a particular (although quite general)
class of nonautonomous
differential problems.
More precisely, we focus on
a single process $U_g(t,\tau)$ generated by a
nonautonomous differential equation on a Banach space $X$ of the form
\begin{equation}
\label{EUNO}
\frac{\dd}{\dd t}u(t)={\mathcal A}(u(t))+g(t),
\end{equation}
where ${\mathcal A}(\cdot)$ is a densely defined operator on $X$, and
$g$ (the symbol) is a function defined on $\R$ with values in some other normed space.
The problem is supposed to be well posed for every initial data $u_0\in X$ taken at
any initial time $\tau\in\R$.
\smallskip

We assume that
$g$ is {\it translation compact} as an element of a given metric space ${\mathfrak L}$.
By definition, this means
that the set of translates
$${\mathcal T}(g)=\big\{g(\cdot+h):\,h\in \R\big\}$$
is precompact in ${\mathfrak L}$.
The closure of ${\mathcal T}(g)$ in the space ${\mathfrak L}$
is called the {\it hull} of  $g$, and is denoted by ${\mathcal H}(g)$.

\begin{example}
Given a domain $\Omega\subset\R^N$, we consider the space
$${\mathfrak L}=L^2_{\rm loc}(\R;L^2(\Omega)).$$
Here, $f$ belongs to ${\mathcal H}(g)$
if there exists a
sequence $h_n\in\R$ such that
$$\lim_{n\to\infty}\,\int_a^b\|g(t+h_n)-f(t)\|^2_{L^2(\Omega)}\,\dd t=0,\quad\forall a>b.
$$
Several translation compactness criteria can
be found in \cite{CV}, also for different choices of the space ${\mathfrak L}$, such as
$L^p_{\rm loc}(\R;L^q(\Omega))$ or ${\mathcal C}_{\rm b}(\R,L^q(\Omega))$.
\end{example}

Then, for every $h\in\R$, we define the translation operator acting
on a vector-valued function $f$ on $\R$ as
$$[T(h)f](t)=f(h+t).$$
It is clear that the family $\{T(h)\}_{h\in\R}$ satisfies the axioms of
a {\it group} of operators on the compact space ${\mathcal H}(g)$.
We also assume that $T(h)$ is strongly continuous, i.e.\footnote{For
most general concrete spaces ${\mathfrak L}$, like those mentioned in the example,
the strong continuity of $T(h)$ is straightforward.}
$$T(h)\in{\mathcal C}({\mathcal H}(g),{\mathcal H}(g)),\quad\forall h\in\R.$$
In which case, it is apparent that
$$T(h){\mathcal H}(g)={\mathcal H}(g),\quad\forall h\in\R.$$
Along with the process $U_g(t,\tau)$ generated by \eqref{EUNO}, we also consider the family of processes
$$\{U_f(t,\tau)\}_{f\in{\mathcal H}(g)},$$
generated by the family of equations
\begin{equation}
\label{EDUE}
\frac{\dd}{\dd t}u(t)={\mathcal A}(u(t))+f(t),\quad f\in{\mathcal H}(g).
\end{equation}
Again, for any choice of the symbol $f\in{\mathcal H}(g)$,
the problem is supposed to be well posed for every initial data $u_0\in X$ taken at
any initial time $\tau\in\R$.
We note that the translation property \eqref{transl}, namely,
\begin{equation}
\label{ETRE}
U_{f}(h+t,h+\tau)=U_{T(h)f}(t,\tau),\quad\forall f\in{\mathcal H}(g),
\end{equation}
actually holds for every $h\in\R$.

\begin{remark}
Such a property reflects the obvious fact that
shifting the time in the initial data is the
same as shifting the time in the symbol.
\end{remark}

Hence, Theorem \ref{theo3} tailored for this particular framework reads as follows.

\begin{theorem}
\label{theo4}
Let the family $U_f(t,\tau)$ generated by \eqref{EDUE} be uniformly
totally dissipative. If it is also
asymptotically closed,
then
$$A_{{\mathcal H}(g)}=\bigcup_{f\in {\mathcal H}(g)}\big\{u(0):\,u(s)\text{ is a {\cbt} for }U_f(t,\tau)\big\}.$$
\end{theorem}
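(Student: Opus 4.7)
The plan is to obtain Theorem~\ref{theo4} as a direct specialization of Theorem~\ref{theo3} to symbol space $\Sigma = {\mathcal H}(g)$. The task reduces to checking that each abstract hypothesis is covered by what is assumed in the translation-compact setting, and then reading off the conclusion.

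First I would verify the framework of Subsection~\ref{GAss}. The hull ${\mathcal H}(g)$ is compact by the very definition of translation compactness of $g$. The translation operators $\{T(h)\}_{h\geq 0}$ form a semigroup on ${\mathcal H}(g)$, and the full invariance $T(h){\mathcal H}(g) = {\mathcal H}(g)$ is recorded just before the definition of $U_f$ as a consequence of $\{T(h)\}_{h\in\R}$ being a group. The translation identity \eqref{ETRE} is exactly the required \eqref{transl}. Hence the skew-product semigroup $S(h)$ on $X \times {\mathcal H}(g)$ is well defined.

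Next I would check the three hypotheses of Theorem~\ref{theo3}. Since $X$ is a Banach space, and hence complete, the equivalence theorem of Section~3 ensures that uniform total dissipativity of the family $U_f(t,\tau)$ is the same as uniform asymptotic compactness. Asymptotic closedness of $U_f(t,\tau)$ with respect to some sequence $0 = h_0 < h_1 < h_2 < \cdots$ is part of the assumptions (via Definition~\ref{DGM}). Finally, the strong continuity of $T(h)$ on ${\mathcal H}(g)$ gives in particular the continuity of each $T(h_k)$.

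With all hypotheses verified, Theorem~\ref{theo3} delivers
$$A_{{\mathcal H}(g)} = \bigcup_{f \in {\mathcal H}(g)} {\mathcal K}_f(0),$$
and unwinding the definition of kernel section turns ${\mathcal K}_f(0)$ into $\{u(0) : u(s) \text{ is a {\cbt} for } U_f(t,\tau)\}$, which is the claimed formula. I expect no substantive obstacle: the real work (existence of the uniform global attractor, invariance of the skew-product attractor via asymptotic closedness, and the correspondence between complete bounded trajectories of $S(h)$ and of the individual processes $U_f$) has already been carried out in Theorems~\ref{MAIN}, \ref{THMgazzola}, and \ref{theo3}. The present statement is simply the observation that the translation-compact framework slots into the abstract setup verbatim.
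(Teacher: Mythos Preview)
Your proposal is correct and matches the paper's approach exactly: the paper presents Theorem~\ref{theo4} with the preamble ``Hence, Theorem~\ref{theo3} tailored for this particular framework reads as follows,'' and gives no separate proof. Your verification that the translation-compact setting satisfies the general assumptions of Subsection~\ref{GAss} and the hypotheses of Theorem~\ref{theo3} is precisely the intended (and only) argument.
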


In fact, requiring a further continuity assumption, we can also provide a description of
the global attractor $A_{\{g\}}$ of the single process $U_g(t,\tau)$ generated by \eqref{EUNO}.

\begin{theorem}
\label{theo5}
Let the hypotheses of Theorem \ref{theo4} hold.
If in addition the map
$$f\mapsto U_f(t,\tau)u_0:{\mathcal H}(g)\to X$$
is continuous for every fixed  $t\geq \tau$ and $u_0\in X$, then
we have the equality
$$A_{\{g\}}=A_{{\mathcal H}(g)}.$$
\end{theorem}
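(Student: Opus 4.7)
The plan is to exploit the chain $\{g\}\subset\mathcal{T}(g)\subset\mathcal{H}(g)$, in which $\mathcal{T}(g)$ is dense in $\mathcal{H}(g)$ by the very definition of the hull, and to sandwich $A_{\{g\}}$ between $A_{\mathcal{T}(g)}$ and $A_{\mathcal{H}(g)}$, closing the sandwich via Corollary~\ref{corry}. Since the large family is uniformly totally dissipative, so is every subfamily, and hence all three uniform global attractors exist. By the Remark following the uniqueness of the uniform global attractor, passing to subfamilies only shrinks the attractor, so we already have
$$A_{\{g\}}\subset A_{\mathcal{T}(g)}\subset A_{\mathcal{H}(g)}.$$

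The first key step is to prove the reverse inclusion $A_{\mathcal{T}(g)}\subset A_{\{g\}}$ by showing that $A_{\{g\}}$ is already uniformly attracting for the subfamily $\{U_f\}_{f\in\mathcal{T}(g)}$. Here I would use the translation property \eqref{ETRE}: for every $f=T(h)g$ with $h\in\R$,
$$U_{T(h)g}(t,\tau)C=U_g(t+h,\tau+h)C.$$
Since in our setup attraction for $U_g$ is uniform with respect to the initial time $\tau\in\R$, one has
$$\sup_{h\in\R}\,\d_X(U_{T(h)g}(t,\tau)C,A_{\{g\}})=\sup_{h\in\R}\,\d_X(U_g(t+h,\tau+h)C,A_{\{g\}})\to 0$$
as $t-\tau\to\infty$. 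Hence $A_{\{g\}}$ is a compact uniformly attracting set for $\{U_f\}_{f\in\mathcal{T}(g)}$, and the minimality of $A_{\mathcal{T}(g)}$ yields $A_{\mathcal{T}(g)}\subset A_{\{g\}}$, so that $A_{\mathcal{T}(g)}=A_{\{g\}}$.

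The second key step is to identify $A_{\mathcal{T}(g)}$ with $A_{\mathcal{H}(g)}$. This is exactly the content of Corollary~\ref{corry} applied with $\Sigma_0=\mathcal{T}(g)$ and $\Sigma=\mathcal{H}(g)$: the density of $\mathcal{T}(g)$ in $\mathcal{H}(g)$ is built into the hull construction, and the continuity hypothesis of Theorem~\ref{theo5}, namely that $f\mapsto U_f(t,\tau)u_0$ is continuous from $\mathcal{H}(g)$ into $X$ for all $t\geq\tau$ and $u_0\in X$, is precisely the hypothesis of Proposition~\ref{propmipiace}. Combining the two steps yields $A_{\{g\}}=A_{\mathcal{T}(g)}=A_{\mathcal{H}(g)}$.

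The only point needing care (rather than any genuine obstacle) is the translation-invariance argument in the first step: one must make sure that the uniformity in $\tau\in\R$ adopted throughout the paper (see the Remark after \eqref{attr-prop}) is actually used, since if one worked only with the weaker notion of attraction from \cite{CV} (where $\tau$ is fixed), the supremum over $h$ would not come for free and one would have to argue at the level of the hull directly. With the stronger definition in force, the step is immediate.
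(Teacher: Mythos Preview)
Your proposal is correct and follows essentially the same route as the paper: the paper also establishes the chain $A_{\{g\}}\subset A_{\mathcal{T}(g)}\subset A_{\mathcal{H}(g)}$, uses Corollary~\ref{corry} together with the density of $\mathcal{T}(g)$ in $\mathcal{H}(g)$ to get $A_{\mathcal{T}(g)}=A_{\mathcal{H}(g)}$, and then uses the translation identity \eqref{ETRE} to show that $A_{\{g\}}$ is already uniformly attracting for $\{U_f\}_{f\in\mathcal{T}(g)}$. The only cosmetic difference is the order in which the two equalities are argued, and your closing remark about the role of uniformity in $\tau$ is a useful addition.
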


\begin{proof}
The existence of $A_{{\mathcal H}(g)}$ implies that $A_{{\mathcal T}(g)}$ and $A_{\{g\}}$ exist too,
and
$$A_{\{g\}}\subset A_{{\mathcal T}(g)}\subset A_{{\mathcal H}(g)}.$$
In light of the additional continuity, we can apply Corollary~\ref{corry} to get
$A_{{\mathcal T}(g)}=A_{{\mathcal H}(g)}$. So, we are left to prove the equality $A_{\{g\}}=A_{{\mathcal H}(g)}$.
Indeed, for an arbitrary bounded set $B\subset X$, we infer from \eqref{ETRE} that
$$
\d_X(U_{T(h)g}(t,\tau )B,A_{\{g\}})=\d_X(U_{g}(h+t,h+\tau )B,A_{\{g\}}).
$$
This tells that the compact set $A_{\{g\}}$, in principle only contained in $A_{{\mathcal T}(g)}$,
is actually uniformly attracting for the family $\{U_f(t,\tau)\}_{f\in{\mathcal T}(g)}$,
hence coincides with its uniform global attractor $A_{{\mathcal T}(g)}$.
\end{proof}

\begin{remark}
An interesting open question is whether or not Theorem~\ref{theo5} remains valid {\it without}
the continuity hypothesis, lying only on the fact
that $f\mapsto U_f(t,\tau)u_0$ is a closed map.
\end{remark}

\section{A Concrete Application}

\noindent
Given a bounded domain $\Omega\subset\R^N$ ($N=1,2$) with smooth boundary $\partial\Omega$
(for $N=2$),
let $g$ be a translation compact
function in the space
$${\mathfrak L}=L^2_{\rm loc}(\R;L^2(\Omega)).$$
In this case, it is certainly true that
$$T(h)\in{\mathcal C}({\mathcal H}(g),{\mathcal H}(g)),\quad\forall h\in\R.$$
For any given initial time $\tau\in\R$, we consider the family of nonautonomous Cauchy problems
on the time-interval $[\tau,\infty)$ in  the unknown $u=u(t)$
depending on the external source $f\in{\mathcal H}(g)$
$$
\begin{cases}
u_{tt}+(1+u^2)u_t-\Delta u+u^3-u=f(t),\\
u_{|\partial\Omega}=0,\\
u(\tau)=u_0,\quad u_t(\tau)=v_0,
\end{cases}
$$
which can be viewed as a model of a vibrating string ($N=1$) or membrane ($N=2$) in a stratified
viscous medium.
Arguing as in \cite{GAT,PZ2D}, dealing with the same model for a time-independent $f$,
for every $\tau'>\tau$ and every
initial data $x=(u_0,v_0)$ in the weak energy space
$$X=H_0^1(\Omega)\times L^2(\Omega),
$$
there is a unique variational solution
$$u\in{\mathcal C}([\tau',\tau],H_0^1(\Omega))\cap {\mathcal C}^1([\tau',\tau],L^2(\Omega)).$$
Accordingly, the equation generates
a dynamical process
$$U_f(t,\tau):X\to X,$$
depending on the symbol $f\in{\mathcal H}(g)$. Repeating the proofs of \cite{GAT,PZ2D},
we can also find a compact uniformly (with respect to $f\in{\mathcal H}(g)$) attracting set.
Hence the family of processes is uniformly asymptotically compact, and
by Theorem~\ref{uac} we infer the existence
of the uniform global attractor $A_{{\mathcal H}(g)}$.
In order to understand the structure of the attractor, we shall distinguish two cases.

\smallskip
\noindent
$\bullet$ If $N=1$, repeating the proofs of \cite{GAT} one can show that the map
$$(x,f)\mapsto U_{f}(t,\tau)x$$
is continuous from $X\times {\mathcal H}(g)$ into $X$.
Thus both Theorem~\ref{theo4} and Theorem~\ref{theo5} apply, yielding
\begin{equation}
\label{paral}
A_{{\mathcal H}(g)}=\bigcup_{f\in {\mathcal H}(g)}
\big\{(u(0),u_t(0)):\,(u(s),u_t(s))\text{ is a {\cbt} for }U_f(t,\tau)\big\}
\end{equation}
along with the identity
$$A_{\{g\}}=A_{{\mathcal H}(g)}.$$

\smallskip
\noindent
$\bullet$ If $N=2$, the process is not strongly continuous.
Nonetheless, introducing the weaker space
$$W=L^2(\Omega)\times H^{-1}(\Omega),$$
one can prove the following
continuous dependence result, analogous to Proposition 2.5 of \cite{PZ2D}.

\begin{proposition}
\label{propclosed}
For every $t\geq\tau$, every $f_1,f_2\in {\mathcal H}(g)$ and every $R\geq 0$, we
have the estimate
$$
\|U_{f_1}(t,\tau)x_1-U_{f_2}(t,\tau)x_2\|_{W}
\leq C\e^{C(t-\tau)}\big[\|x_1-x_2\|_X+\|f_1-f_2\|_{L^2(t,\tau;L^2(\Omega))}\big],
$$
for some
$C=C(R)\geq 0$ and all initial data $x_1,x_2\in X$ of norm not exceeding $R$.
\end{proposition}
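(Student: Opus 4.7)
The plan is to mimic the strategy of Proposition~2.5 in \cite{PZ2D}, where the same model with a time-independent source is treated, but carrying the $f$-dependence through the estimates. Set $u_i(t)=\Pi_1 U_{f_i}(t,\tau)x_i$ for $i=1,2$, and denote the difference by $\bar u=u_1-u_2$, $\bar v=\bar u_t$. Subtracting the two equations, $(\bar u,\bar v)$ solves
\begin{equation*}
\bar u_{tt}+(1+u_1^2)\bar u_t-\Delta\bar u+(u_1^2-u_2^2)u_{2,t}+(u_1^3-u_2^3)-\bar u=f_1-f_2,
\end{equation*}
with initial datum $x_1-x_2$ at time $\tau$. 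The key point is to work in the weaker energy space $W=L^2(\Omega)\times H^{-1}(\Omega)$, so one should test the equation with $(-\Delta)^{-1}\bar v$ in $L^2(\Omega)$; this produces
\begin{equation*}
\tfrac{\dd}{\dd t}\mathcal{E}(t)+\|\bar v\|_{H^{-1}}^2+\langle (1+u_1^2)\bar v,(-\Delta)^{-1}\bar v\rangle+\cdots=\langle f_1-f_2,(-\Delta)^{-1}\bar v\rangle,
\end{equation*}
where $\mathcal{E}(t)\sim\|\bar u\|_{L^2}^2+\|\bar v\|_{H^{-1}}^2$ controls the $W$-norm.

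Next I would estimate each nonlinear contribution. The standard energy estimate in $X$ for the original equation gives uniform bounds (on the initial ball of radius $R$ and in terms of the translates of $g$) for $u_i$ in $L^\infty(\tau,t;H^1_0)$ and $u_{i,t}$ in $L^\infty(\tau,t;L^2)$; here the translation compactness of $g$ ensures $\|f_i\|_{L^2_{\rm loc}(\R;L^2)}$ is controlled independently of $f_i\in{\mathcal H}(g)$. In dimension $N=2$, $H^1_0\hookrightarrow L^p$ for every $p<\infty$, so $u_1^2-u_2^2=(u_1+u_2)\bar u$ and $u_1^3-u_2^3=(u_1^2+u_1u_2+u_2^2)\bar u$ are controlled in $L^2$ by $C(R)\|\bar u\|_{L^2}$ and the cubic-damping correction $(u_1^2-u_2^2)u_{2,t}$ pairs with $(-\Delta)^{-1}\bar v$ and is absorbed via duality $L^2\times H^{-1}\to\R$, losing no regularity.

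After absorbing the good dissipative term $\|\bar v\|_{H^{-1}}^2$ into the right-hand side and applying Cauchy--Schwarz on the forcing contribution (which produces a term $\|f_1-f_2\|_{L^2(\Omega)}\|\bar v\|_{H^{-1}}$), one arrives at a differential inequality of the form
\begin{equation*}
\tfrac{\dd}{\dd t}\mathcal{E}(t)\leq C(R)\,\mathcal{E}(t)+C\|f_1(t)-f_2(t)\|_{L^2}^2,
\end{equation*}
and the announced estimate follows from the integral Gronwall lemma. The main obstacle I expect is the nonlinear damping cross-term $(u_1^2-u_2^2)u_{2,t}$: since $u_{2,t}$ is only $L^2$-bounded in time and the difference $u_1^2-u_2^2$ multiplies it, one must use the $H^1\hookrightarrow L^p$ Sobolev embedding together with a careful pairing against $(-\Delta)^{-1}\bar v\in H^1_0$, exactly the trick exploited in \cite{PZ2D}, and verify it is not spoiled by the time-dependence of the source; here the uniform $L^2_{\rm loc}$ bound on elements of $\mathcal{H}(g)$ inherited from translation compactness plays the crucial role.
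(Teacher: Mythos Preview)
Your proposal is correct and follows precisely the route indicated in the paper: the proposition is stated there without proof, with the remark that it is ``analogous to Proposition~2.5 of \cite{PZ2D}'', and your sketch is exactly the adaptation of that argument to the nonautonomous setting, carrying the extra source term $f_1-f_2$ through the $W$-energy estimate and invoking the uniform $L^2_{\rm loc}$ bounds furnished by translation compactness.
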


In other words, for every fixed $t\geq\tau$, we have the weaker continuity
$$(x,f)\mapsto U_{f}(t,\tau)x\in{\mathcal C}(X\times {\mathcal H}(g),W).$$
This is enough to infer that the map
$$(x,f)\mapsto U_{f}(h,0)x:X\times {\mathcal H}(g)\to X$$
is closed for every $h\geq 0$.
We conclude from Theorem~\ref{theo4} that $A_{{\mathcal H}(g)}$ fulfills the
same characterization \eqref{paral} of the case $N=1$.

\begin{remark}
If the function $g$ is periodic, i.e.\
$$g(\cdot+p)= g(\cdot)\quad\text{for some}\,\,
p>0,$$
then we have the trivial equality
$${\mathcal H}(g)={\mathcal T}(g)=\big\{g(\cdot+h):\,0\leq h<p\big\},$$
providing at once the identity $A_{\{g\}}=A_{{\mathcal H}(g)}$.
Moreover, it is known that the uniform global attractor
of a periodic process coincides with the nonuniform (with respect to the initial time $\tau\in\R$)
one. More details can be found in \cite{CV3,CV4}.
\end{remark}



\end{document}